\pgfplotsset{compat=newest}
\tikzset{external/only named=true}
\newtheorem{theorem}{Theorem}[section]
\newtheorem{lemma}[theorem]{Lemma}
\newtheorem{proposition}[theorem]{Proposition}
\newtheorem{corollary}[theorem]{Corollary}
\newtheorem{definition}[theorem]{Definition}
\newtheorem{example}[theorem]{Example}
\theoremstyle{definition}
\newcommand{\RR}{\mathbb{R}}
\newcommand{\NN}{\mathbb{N}}
\newcommand{\norm}[1]{\left\lVert#1\right\rVert}
\title{Regularity Properties of Solutions of a Model for Morphoelastic Growth in the Presence of Nutrients in One Spatial Dimension}
\author{Julian Blawid\footnote{Faculty of Mathematics, University of Regensburg; Regensburg; Germany;\\ e-mail: julian.blawid@mathematik.uni-regensburg.de (corresponding author)}, Georg Dolzmann\footnote{Faculty of Mathematics, University of Regensburg; Regensburg; Germany}}
\date{30.07.2025}
\begin{document}

\maketitle

\begin{abstract}
Regularity properties of solutions for a class of quasi-stationary models in one spatial dimension for stress-modulated growth in the presence of a nutrient field are proven. At a given point in time the configuration of a body after pure growth is determined by means of a family of ordinary differential equations in every point in space. Subsequently, an elastic deformation, which is given by the minimizer of a hyperelastic variational integral, is applied in order to restore Dirichlet boundary conditions. While the ordinary differential equations governing the growth process depend on the elastic stress and the pullback of a nutrient concentration in the current configuration, the hyperelastic variational problem is solved on the intermediate configuration after pure growth. Additionally, the coefficients of the reaction-diffusion equation determining the nutrient concentration in the current configuration depend on the elastic deformation and the deformation due to pure growth.
\end{abstract}

\textbf{Acknowledgement:}
  The first author was supported by the Graduiertenkolleg 2339 IntComSin of the Deutsche Forschungsgemeinschaft (DFG, German Research Foundation) – Project-ID 321821685.

\section{Introduction}

In recent decades, the mathematical community has developed an ever-increasing interest in the modeling of biomechanical phenomena. In particular, the theory of morphoealstic growth, that is, growth under the influence of elastic stress, has sparked significant interest in the past years because of the many challenging problems that arise in the analysis of the proposed models. Although many mathematical models for different kinds of growth phenomena have been developed, for example in \cite{Ben.Amar.Martine.Goriely.2005}, \cite{Goriely.RobertsonTessi.Vandiver.2008}, \cite{Goriely.Vandiver.2009}, \cite{Moulton.Lessinnes.Goriely.2020} and in particular in \cite{Goriely.2017}, rigorous mathematical results for such models are relatively scarce. This is largely because of the fact that the analysis of dynamical or quasistatic models involving finite elasticity poses significant challenges due to a lack of regularity of minimiziers of hyperelastic variational integrals combined with a blow-up of the energy density as the determinant of the deformation gradient approaches zero. In the $n$-dimensional case, this difficulty has to be circumvented by introducing certain regularizations. In \cite{Davoli.Nik.Stefanelli.2023}, existence of solutions was proved via a time-incremental update scheme after a regulatization of the system. Moreover, models in the context of thermoviscoelasticity involving a blow-up of the energy density as the determinant of the deformation gradient approaches zero were treated in \cite{Mielke.Roubicek.2020}, \cite{Badal.Friedrich.Kruzik.2023} and \cite{Badal.Friedrich.Machill.2024}. Because in our model, the change in volume due to growth is modeled by an ordinary differential equation in every point in space it is not possible to recover the deformation due to pure growth in the $n$-dimensional setting. As the elastic deformation happens after an incremental phase of pure growth and should therefore be defined on some intermediate configuration, another difficulty arises. This issue can be partially resolved by introducing a multiplicative decomposition of the deformation gradient into an elastic part and a growth-related part. Models featuring this multiplicative decomposition of the deformation gradient in the context of crystal plasticity, where similar issues arise, were, for example, treated in \cite{Kroener.1960} and \cite{Lee.1969}. Significant analytical advancements for models with a multiplicative decomposition of the deformation gradient in the context of elastoplasticity were obtained in \cite{Mielke.2003} and \cite{Mielke.2004}. For growth processes, this idea has been adapted in \cite{Skalak.Dasgupta.Moss.Otten.Dullemejer.Vilman.1982}, \cite{Rodriguez.Hoger.McCulloch.1994}, \cite{Taber.1995}, \cite{Lubarda.Hoger.2002} and \cite{Menzel.Kuhl.2021} where the deformation of the reference configuration into some virtual, non-existent intermediate configuration due to plastic deformation is replaced by a deformation due to pure, stress-free volumetric growth. In the one-dimensional situation this difficulty does not arise because one can recover the deformation due to pure growth solely from the information about the volumetric change due to pure growth which is determined by the ordinary differential equations simply by integrating. A theory for growth processes based on the split of the full process into stress-free volumetric growth and a subsequent mechanical response has been developed in \cite{Ambrosi.Mollica.2002}, \cite{Ambrosi.Guillou.2007} and \cite{Ambrosi.Ateshian.Arruda.Cowin.Dumais.Goriely.Holzapfel.Humphrey.Kemkemer.Kuhl.Olberding.Taber.Garipikati.2011}. While our model describes growth phenomena in the bulk term, accretive growth such as the thickening of trees has been treated mathematically rigorous in \cite{Davoli.Nik.Stefanelli.Tomasetti.2024}. For free boundary fluid-structure interaction problems in the context of blood-vessel growth a model was proposed in \cite{Yang.Jaeger.Neuss-Radu.Richter.2016} and analytical advancements were achieved in \cite{Abels.Liu.2023.1}, \cite{Abels.Liu.2023.2} and \cite{Abels.Liu.2023.3}.

In this article, we present a regularity result for a one-dimensional model for morphoelastic growth in the presence of nutrient developed by Bangert and Dolzmann in \cite{Bangert.Dolzmann.2023}. In particular, we prove higher differentiability of solutions in time and space which proves to be useful for constructing suitable numerical schemes.

With our model, we would like to describe a one-dimensional elastic material, henceforth referred to by "rod", consisting of elastic tissue which is confined to a straight line and which grows over time. The key assumption is that, in the presence of a nutrient field, the growth process is driven by the internal stresses of the rod. While the rod is allowed to change it's density in every point in space, the confinement to a straight line excludes bending phenomena. In this particular, one-dimensional situation, twisting phenomena which would correspond to a torsion of the rod are possible from a modeling perspective. However, such twisting phenomena are not considered in our model. As we only allow for orientation preserving, invertible deformations, buckling effects are also not considered. A model involving the aforementioned effects is treated in \cite{Bressan.Palladino.Shen.2017}. In its initial configuration, that is, at time $t=0$, the configuration of the rod is given by the interval $I_0:=(0,L_0) \subset \RR$ for some $L_0>0$. We assume that the rod is confined to a certain line-segment at all times, which is realized by imposing Dirichlet boundary conditions. For $T>0$, we denote by $y:[0,T] \times [0,L_0] \to \RR$ the deformation of the rod and we assume that, given $l_0>0$, for all $t \in [0,T]$ the Dirichlet boundary conditions $y(t,0)=0$ and $y(t,L_0)=l_0$ are satisfied. Even though our analysis does not require this, we assume in the following that $l_0=L_0$ to guarantee an unstressed initial configuration. As the growth process induces a volumetric change at a given point in space and time, which is described by the growth tensor $G:[0,T] \times [0,L_0]\to \RR$, the growth map $g:[0,T] \times [0,L_0] \to \RR$, representing the deformation after pure growth, can be recovered in this special one-dimensional situation by means of the relation $\partial_X g(t,\cdot)=G(t,\cdot)$. After this initial phase of pure growth, the rod is described by an intermediate, natural or virtual configuration $I_v(t):=(g(t,0),g(t,L_0))$. 

However, in this virtual configuration, the Dirichlet boundary conditions are violated. To restore them, we introduce a subsequent elastic deformation $\phi:[0,T] \times [0,L_0]\to \RR$. This elastic deformation is computed by minimizing a hyperelastic variational integral on the virtual configuration which leads to the rod being in the current or deformed configuration $I_c:=[0,l_0]$. A constituitive assumption in our model is that the material properties do not change after pure growth. Therefore, it suffices to define a stored energy density $W:[0,L_0] \times (0,\infty) \to [0,\infty)$ in the reference configuration. After growth induced by the growth tensor $G$, the stored energy density on the natural configuration is then, in every $p \in \RR$ and $t \in [0,T]$, given by $W_G(\cdot,p):=W(g^{-1}(t,\cdot),p)$. Under sufficient coercivity, regularity and convexity assumptions on the stored energy density $W$, the elastic deformation $\phi:[g(t,0),g(t,L_0)]\to \RR$ can then be computed by minimizing the hyperelastic variational integral
\begin{equation*}
    \int_{g(t,0}^{g(t,L_0)} W_G(z,\partial_z\phi(t,z))dz=\int_{g(t,0)}^{g(t,L_0)} W(g^{-1}(t,z),\partial_z\phi(t,z))dz
\end{equation*}
on the space of all absolutely continuous functions satisfying $\varphi(t,g(t,0))=0$ and $\varphi(t,g(t,L_0))=l_0$. This elastic deformation induces internal stresses in the material. Even though we assume the stored energy density $W$ to blow up as the differential of the elastic deformation approaches zero, Ball proved in \cite{Ball.1981} that minimizers in fact satisfy the associated Euler-Lagrange equations as their differential is uniformly bounded away from zero. Hence, the stress
\begin{equation*}
    S(t)=(\partial_pW)(g^{-1}(t,z),\partial_z\phi(t,z))
\end{equation*}
is constant across the entire rod which is another particularity of the one-dimensional situation. Our aim is to understand how the growth process is influenced by the elastic stresses. 

In nature, growth requires the presence of a nutrient field. Therefore, we complement our model by an elliptic reaction-diffusion equation describing the diffusion and absorption of nutrients. Because diffusion and absorption of nutrients happen in the current configuration $I_c$, the reaction diffusion equation is also formulated in the current configuration. As diffusion and absorption of nutrients happen on much shorter time scales than the growth process, the time dependence of the nutrient concentration is only implicitly given by means of the diffusion rate $D$ and the absorption rate $\beta$ in the current configuration which depend on time. We assume that for all $t \in [0,T] $ the nutrient concentration $n(t,\cdot)$ in the current configuration is given by the solution of the equation
\begin{equation*}
    \partial_x(D(t,\cdot)\partial_x n(t,\cdot))=-\beta(t,\cdot)n(t,\cdot)
\end{equation*}
subject to the boundary conditions $n(t,0)=n_L$ and $n(t,l_0)=n_R$. Here, the specific choice of the diffusion rate $D$ and the absorption rate $\beta$, depending on space and time, are constituitive assumptions of the model. Again, we exploit the fact that the material properties do not change after pure growth, that is, we assume that the diffusion coefficient $D_v$ and the absorption rate $\beta_v$ in the natural configuration are, for all $(t,z)\in [0,T] \times [g(t,0),g(t,L_0)]$, given by
\begin{equation*}
    D_v(t,z)=(D_0 \circ g^{-1})(t,z) \qquad \textup{ and } \beta_v(t,z)=(\beta_0 \circ g^{-1})(t,z)
\end{equation*}
where $D_0$ and $\beta_0$ denote the diffusion coefficient and reaction rate in the reference configuration respectively. As in \cite{Bangert.Dolzmann.2023}, this leads, by transforming the equation from the deformed or current configuration onto the virtual configuration and assuming that solutions transform by concatenation with $\phi$, to the assumption that, for $(t,x) \in (0,T) \times (0,l_0)$ the coefficients in the deformed configuration are given by
\begin{equation}\label{1assumptionsD}
    D(t, x)=D_0(g^{-1}(t,\phi^{-1}(t, x)))\partial_z\phi(t,\phi^{-1}(t, x))\qquad \textup{ and }
\end{equation}
\begin{equation}\label{1assumpitonsbeta}
    \beta(t, x)=(\partial_z\phi(t, \phi^{-1}(t, x)))^{-1}\beta_0(g^{-1}(t, \phi^{-1}(t,x))).
\end{equation}
If the material is severely compressed, which is the case if the volumetric change is large, that means, if the rod has grown a lot, the associated diffusion coefficient is small and the absorption rate is large. On the other hand, if the material experiences little growth or self-absorption, the associated diffusion coefficient is large and the absorption rate is small. We will assume in the following that $D_0,\beta_0\in W^{k+m}([0,L_0])$ for $k,m \in \NN$ and that there exist constants $D_{min},D_{max},\beta_{min},\beta_{max}\in (0,\infty)$ with $D_0(X)\in [D_{min},D_{max}]$ and $\beta_0(X)\in[\beta_{min},\beta_{max}]$ for all $X\in [0,L_0]$ as this allows us to obtain good elliptic regularity estimates.

It remains to clarify how we obtain the growth tensor $G$, which is the basic variable of our model. We saw in the foregoing discussion that we can recover the growth map $g$ from the growth tensor $G$ by integration. This determines the natural configuration $I_v$ on which we solve the hyperelastic variational problem to determine the elastic deformation $\phi$. The total deformation $y$ of the rod is then given by the concatenation, $y=\phi \circ g$. Once we know the growth map $g$ and the elastic deformation $\phi$ we can find the diffusion coefficient $D$ and the absorption rate $\phi$ in the current configuration which determine the nutrient concentration $n$ in the current configuration by solving the elliptic reaction diffusion equation. The nutrient concentration in Lagrangian coordinates $N$ is given by $N:=n \circ y$. However, we still have to state a constituitive equation to determine the growth tensor $G$ which drives the evolution of the system. We assume that $G$ satisfies the ordinary differential equation
\begin{equation*}
    \dot{G}(t,X)=\mathcal{G}(X,G(t,X),S(t),N(t,X)),\quad G(0,X)=1
\end{equation*}
for each $(t,X)\in (0,T) \times [0,L_0]$ where $\mathcal{G}$ satisfies the assumption (G1)-(G4) made in section \ref{sectionglobalexistence1d}. In particular, the local volumetric change of the rod in a given point in space and time is influenced by the elastic stress and the nutrient concentration. Our goal is to prove global existence and uniqueness of solutions of the following system.
\begin{definition}\label{Definition Chapter Regularity Definition of Solutions}
    Let $k,m \in \NN$ with $k \geq 0$ and $m \geq 1$. A function $G\in C^m([0,T],C^k([0,L_0]))$ determines a solution of the one-dimensional growth problem with initial condition $G_0\in C^{k}([0,L_0])$ with $G_0>0$ on $[0,L_0]$ if $G$ is a solution of the ordinary differential equation
    \begin{equation*}
        \dot{G}=\hat{\mathcal{G}}(G),\quad G(0)=G_0
    \end{equation*}
    in the Banach space $C^k([0,L_0])$. Here, for $X\in [0,L_0]$ and $G\in C^k([0,L_0])$
    \begin{equation*}
        \hat{\mathcal{G}}(G)(X)=\mathcal{G}(X,G(X),S(G),N(G))
    \end{equation*}
    with a suitable function $\mathcal{G}$ satisfying the assumptions (G1)-(G4) made in section \ref{sectionglobalexistence1d}. Moreover, $S=S(G)\in\RR$ denotes the elastic stress due to the elastic deformation and $N=N(G)=n(G)\circ y(G)\in H^{k+1}(0,L_0)$ denotes the nutrient concentration in the reference configuration.
\end{definition}

To obtain global existence of solutions via Picard-Lindel\"of's theorem, it is crucial that we show that the maps $G \mapsto S(G)$ and $G \mapsto N(G)$ are Lipschitz continuous and uniformly bounded on a suitable subset of $C^k([0,L_0])$. Because we want to prove a global existence result we have to control the solutions of the ordinary differential equation by imposing sub- and supersolutions. This also allows us to guarantee that the growth map $g$ and the elastic deformation $y$ are globally invertible and orientation preserving by ensuring that $\partial_X g = G\geq c >0$. 

\begin{definition}\label{defB1}
    Let $G_0\in C^k([0,L_0])$ and $R_1,R_k\in (0,\infty)$ with
    \begin{equation*}
        \inf_{X\in [0,L_0]} G_0(X)-R_1=:\Gamma_0>0\qquad \textup{ and } \qquad\sup_{X\in [0,L_0]} G_0(X)+R_1=:\Gamma_1<\infty.
    \end{equation*}
    Then, we define $B_1=B_{C^0([0,L_0])}(G_0,R_1)\cap B_{C^k([0,L_0])}(G_0,R_k)\subset C^k([0,L_0])$. 
\end{definition}
Every $G\in B_1$ satisfies $\inf G\geq \Gamma_0$ and $\sup G\leq \Gamma_1$ and thus, $g$ with $\partial_X g=G$ is bi-Lipschitz. Moreover, all $G\in B_1$ are uniformly bounded with respect to the $C^k$-norm. We say that a given quantity depends only on the global constants if it depends only on $k$, $m$, $T$, $\Gamma_0$, $\Gamma_1$, $R_1$, $R_k$, $L_0$, $l_0$, $D_{min}$, $D_{max}$, $\beta_{min}$ , $\beta_{max}$ and the elastic energy density $W$.

Although our analysis does not take advantage of this, our model features a multiplicative decomposition of the deformation gradient into a purely elastic part and the growth tensor. Because the total deformation $y: (0,T) \times [0,T] \to \RR$ is given by the concatenation of the deformation due to pure growth $g: (0,T) \times [0,L_0] \to \RR$ and the elastic deformation $\phi:(0,T) \times (g(t,0), g(t,L_0))$, $y=\varphi \circ g$, the decomposition of the deformation gradient into the growth part and the elastic part is given by
\begin{equation*}
    \partial_X y(t,X)=(\partial_z\phi)(t, g(t, X))\partial_Xg(t, X)=F_e(t, X)G(t, X).
\end{equation*}
This multiplicative decomposition plays an important role in the $n$-dimensional theory because in this case, the growth map $g$ cannot necessarily be recovered from the growth tensor $G$ and the virtual configuration $I_v$ does not necessarily exist. However, the multiplicative decomposition of the deformation gradient into an elastic contribution and a growth-related part still allows us to treat the local volumetric change due to growth and elastic deformation separately.

\section{The Hyperelastic Variational Problem and Lipschitz Dependence of the Stresses}\label{Section Chapter Regularity The Hyperelastic Variational Problem, Lipschitz Dependence of the Stress on the Growth Tensor and Time Regularity of the Stresses}

In this section, we consider the classical hyperelastic variational problem and state the precise and minimal assumptions under which a minimizer on $g(t, [0,L_0])=[g(t, 0),g(t, L_0)]$ exists and lies in the space $C^{k+1}([g(t,0),g(t,L_0)])$. Note, that we require $\partial_X \phi$ to be of the class $C^k$ as the elastic stress $S(G)$ associated to a growth tensor $G \in C^k([0,L_0])$ depends on it and we will later prove that the map $G \mapsto S(G)$ is Lipschitz continuous with respect to the $C^k$-norm.

We first show existence of a solution in the space $C^{1}([g(0,t),g(L_0,t)])$, which is given according to \cite{Ball.1981}, and subsequently prove the higher regularity via the bootstrap method. We drop the dependence on $t$ and consider $g=g(X)$ with $g'=\partial_X g=G$ as a function of the independent variable $X$ only. The variational problem can be formulated as follows: Suppose that $[g(0),g(L_0)]$ is an interval with non-empty interior. We aim to minimize the functional
\begin{equation*}
    I_G[\phi]=\int_{g(0)}^{g(L_0)} W_G(z,\phi'(z))dz=\int_0^{L_0} W(X,\phi'(g(X))G(X)dX
\end{equation*}
on the space of admissible functions
\begin{equation*}
    \mathcal{A}_G=\{\phi\in W^{1,1}(g(0),g(L_0)):I_G[\phi]<\infty,\,\,\phi(g(0))=0,\phi(g(L_0))=l_0\}
\end{equation*}
with $l_0>0$ given. Here, $W_G$ denotes the free energy of the body in the natural configuration. Remember, that we assume in the model that the elastic properties of the material in a point $g(X)\in[g(0),g(L_0)]$ are the same as in the point $X\in[0,L_0]$ in the reference configuration. Thus, $W_G(z,\cdot)=W(g^{-1}(z),\cdot)$ for $z\in [g(0),g(L_0)]$. We now state the necessary assumptions on the stored energy density to obtain existence and regularity of minimizers. Assumptions (W1)-(W3) are the same as in \cite{Bangert.Dolzmann.2023} and are sufficient to obtain existence. Assumption (W4) gives rise to higher regularity of the minimizer.
\\

(W1) $W\in C^0([0,L_0]\times(0,\infty);[0,\infty))$, $W(X,1)=0$ and for all $X\in[0,L_0]$ fixed $W(X,\cdot)\in C^1(0,\infty)$ with $\partial_pW\in C^0([0,L_0]\times (0,\infty))$.
\\

(W2) There exists a convex function $\theta:(0,\infty)\to\RR$ with $\theta(p)\nearrow\infty$ as $p\searrow 0$ and $\theta(p)/p\nearrow\infty$ as $p\nearrow \infty$ such that for all fixed $X\in [0,L_0]$ $W(X,p)\geq \theta(p)$ on $(0,\infty)$. Furthermore, for all fixed $X\in [0,L_0]$ $W(X,\cdot)$ is strictly convex.
\\

(W3) For all $\lambda\in (0,\infty)$ $\int_0^{L_0} W(X,\lambda)dX<\infty$. 
\\

(W4) $\partial_p W\in C^{k+m}([0,L_0]\times (0,\infty);\mathbb{R})$ and $\partial_{pp}W>0$ on $[0,L_0] \times (0,\infty)$.
\\

The following Lemma follows from the existence of a uniform lower bound for $W(X,\cdot)$ given by $\theta$ and serves as an a priori estimate.

\begin{lemma}[Lemma 3 in \cite{Bangert.Dolzmann.2023}]\label{LemmaChapter Regularity Coercivity Bounds}
    Suppose that $W$ satisfies (W1)-(W3), that $S\in\RR$, that $E\subset [0,L_0]$ with $E\neq\emptyset$ and that $\pi:E\to(0,\infty)$ such that $\partial_p W(X,\pi(X))=S$ for all $X\in E$. Then, there exist constants $P_0,P_1\in(0,\infty)$ which depend only on $\theta$ and $S$ such that for all $X\in E$ $\pi(X)\in [P_0,P_1]$. Moreover, if $\Sigma_0,\Sigma_1\in\RR$ with $\Sigma_0<\Sigma_1$, $P_0$ and $P_1$ can be chosen uniformly for $S\in [\Sigma_0,\Sigma_1]$ and depend only on $\theta, \Sigma_0$ and $\Sigma_1$.
\end{lemma}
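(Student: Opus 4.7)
The plan is to convert the first-order information $\partial_p W(X,\pi(X)) = S$ into a two-sided bound on $\pi(X)$ purely through convexity and the coercivity bound $W \ge \theta$. The key observation is that we are allowed to use the subgradient inequality coming from convexity of $W(X,\cdot)$, since (W2) makes $W(X,\cdot)$ convex and (W1) gives the $C^1$-regularity needed so that the subdifferential at $\pi(X)$ coincides with $\{\partial_p W(X,\pi(X))\}=\{S\}$. Since $W(X,1)=0$ by (W1), evaluating the supporting-line inequality at $p=1$ and $q=\pi(X)$ yields
\begin{equation*}
    0 = W(X,1) \ge W(X,\pi(X)) + S\bigl(1-\pi(X)\bigr),
\end{equation*}
and combining this with $W(X,\pi(X)) \ge \theta(\pi(X)) \ge 0$ (from (W2)) gives the pointwise master inequality
\begin{equation*}
    0 \le \theta(\pi(X)) \le S\bigl(\pi(X)-1\bigr).
\end{equation*}
This inequality is the single fact that encodes everything.

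Next I would derive the upper bound $P_1$ from this inequality by contradiction. Dividing by $\pi(X)$ (assuming $\pi(X) > 0$) gives $\theta(\pi(X))/\pi(X) \le S - S/\pi(X)$, whose right-hand side remains bounded by $|S|+1$ for $\pi(X) \ge 1$. Since (W2) forces $\theta(p)/p \nearrow \infty$ as $p \nearrow \infty$, there exists $P_1 = P_1(\theta,S)$ with $\theta(p)/p > |S|+1$ for all $p > P_1$; consequently $\pi(X) \le P_1$ on $E$. For the lower bound, I use $\theta(\pi(X)) \le S(\pi(X)-1) \le |S|(\pi(X)+1) \le 2|S|$ whenever $\pi(X) \le 1$, and then invoke the divergence $\theta(p) \nearrow \infty$ as $p \searrow 0$ from (W2) to produce $P_0 = P_0(\theta,S) \in (0,1]$ such that $\theta(p) > 2|S|$ for all $p < P_0$. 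Hence $\pi(X) \ge P_0$ on $E$.

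Finally, the uniform version for $S \in [\Sigma_0,\Sigma_1]$ is essentially free: the quantity $|S|$ in both threshold arguments can be replaced by $M:=\max(|\Sigma_0|,|\Sigma_1|)$, after which the same monotonicity arguments produce thresholds $P_0 = P_0(\theta,\Sigma_0,\Sigma_1)$ and $P_1 = P_1(\theta,\Sigma_0,\Sigma_1)$ independent of the particular $S$ in the range. I do not expect a real obstacle here: the main conceptual point is just recognizing that convexity plus $W(X,1)=0$ turns the first-order equation $\partial_p W = S$ into the one-line bound $\theta(\pi) \le S(\pi-1)$, after which the superlinear/blow-up behavior in (W2) does all the work. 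The only minor care needed is that (W1) only asserts $C^1$-regularity in $p$, so one should invoke the standard fact that for convex $C^1$ functions the supporting line at $q$ has slope $\partial_p W(X,q)$ rather than appealing to differentiability in both variables.
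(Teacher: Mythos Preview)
Your argument is correct. The paper does not actually prove this lemma---it merely quotes it as Lemma~3 from \cite{Bangert.Dolzmann.2023}---so there is no in-paper proof to compare against; your supporting-line trick (evaluating the convexity inequality for $W(X,\cdot)$ at $p=1$ where $W$ vanishes, then invoking the coercivity of $\theta$ at both ends) is the natural route and almost certainly the one intended in the cited reference.

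One cosmetic point: assumption (W2) only gives $\theta:(0,\infty)\to\RR$, and since $W(X,1)=0\ge\theta(1)$ the function $\theta$ need not be nonnegative, so the inequality ``$0\le\theta(\pi(X))$'' in your master estimate is not literally justified. This is harmless because you never use that lower bound afterwards; if you prefer, replace $\theta$ by $\max(\theta,0)$, which is still convex, still satisfies $W\ge\max(\theta,0)$ by (W1), and retains the required blow-up behaviour at $0$ and superlinear growth at $\infty$.
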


As in \cite{Bangert.Dolzmann.2023} we observe that (W2), that is, the strict convexity, implies that the map $\partial_p W(X,\cdot):(0,\infty)\to\RR$, $p\mapsto \partial_p W(X,p)$ is strictly increasing and bijective. Thus, there exists a uniquely defined function $\pi_0:[0,L_0]\times\RR\to (0,\infty)$ such that for all $X\in[0,L_0]$ and for all $S\in\RR$ the equation $\partial_p W(X,\pi_0(X,S))=S$ holds. Since the formula for the time-dependent diffusion coefficient introduces the term $\phi_G'\circ\phi_G^{-1}$ we need to prove Lipschitz continuity of the map $\pi_0$ and all its derivatives up to order $k$ with respect to both variables as $\phi_G' \circ \phi_G^{-1} = (\phi_G' \circ g) \circ (g^{-1} \circ \phi_G^{-1}) = \pi_0(\cdot, S_G) \circ y_G^{-1}$.

\begin{lemma}\label{1LipschitzCk}
    Suppose that $W$ satisfies (W1)-(W4). Then, for $0\leq l\leq k$ the map
    \begin{equation*}
        \partial_X^l\pi_0:(X,S)\mapsto \partial_X^l\pi_0(X,S)=\partial_X^l(\partial_p W(X,\cdot))^{-1}(S)
    \end{equation*}
    is uniformly separately Lipschitz continuous on compact intervals, that is, for all $S_0,S_1\in\RR$ with $S_0<S_1$ there exist constants $L_{\partial_p W^{-1},X}$ and $L_{\partial_p W^{-1},S}$ which depend only on the derivatives of $\partial_p W$ such that for all $X,\Tilde{X}\in[0,L_0]$ and $S, \Tilde{S}\in[S_0,S_1]$
    \begin{equation*}
        |\partial_X^l\pi_0(X,S)-\partial_X^l\pi_0(\Tilde{X},S)|\leq L_{\partial_p W^{-1},X}|X-\Tilde{X}|
    \end{equation*}
    and
    \begin{equation*}
        |\partial_X^l\pi_0(X,S)-\partial_X^l\pi_0(X,\Tilde{S})|\leq L_{\partial_p W^{-1},S}|S-\Tilde{S}|.
    \end{equation*}
\end{lemma}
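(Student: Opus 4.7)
The plan is to use the implicit function theorem together with the a priori bound from Lemma \ref{LemmaChapter Regularity Coercivity Bounds} to reduce the Lipschitz claim to $C^1$-bounds on a compact set.

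The first step is to read the defining identity $\partial_p W(X, \pi_0(X, S)) = S$ as an implicit equation for $\pi_0$. Setting $F(X, p, S) := \partial_p W(X, p) - S$, assumption (W4) yields $F \in C^{k+m}$ and $\partial_p F = \partial_{pp} W > 0$, while (W2) ensures that $p \mapsto \partial_p W(X, p)$ is for every $X$ a strictly increasing bijection from $(0, \infty)$ onto $\RR$. Hence the local implicit function theorem applies at every point, and by uniqueness of the implicit solution the local representations glue together to give $\pi_0 \in C^{k+m}([0, L_0] \times \RR; (0, \infty))$.

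The second step is to extract formulas for $\partial_X^l \pi_0$ inductively. Differentiating the identity once in $X$ yields
\begin{equation*}
    \partial_X \pi_0(X, S) = - \frac{\partial_{Xp} W(X, \pi_0(X, S))}{\partial_{pp} W(X, \pi_0(X, S))},
\end{equation*}
and repeated application of the chain rule / Faà di Bruno shows by induction on $l$ that $\partial_X^l \pi_0$ is a rational expression in the partial derivatives of $\partial_p W$ up to order $l$, with powers of $\partial_{pp} W$ in the denominator, evaluated at $(X, \pi_0(X, S))$. Since $m \geq 1$, this representation is in particular $C^1$ in $(X, S)$, and the first partials $\partial_X^{l+1} \pi_0$ and $\partial_S \partial_X^l \pi_0$ admit analogous rational expressions involving derivatives of $\partial_p W$ up to order $l + 1 \leq k + 1 \leq k + m$.

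The third step is to localize using Lemma \ref{LemmaChapter Regularity Coercivity Bounds}: given $S_0 < S_1$, one obtains $P_0, P_1 \in (0, \infty)$ depending only on $\theta, S_0, S_1$ such that $\pi_0([0, L_0] \times [S_0, S_1]) \subset [P_0, P_1]$. On the compact rectangle $[0, L_0] \times [P_0, P_1]$ the continuous function $\partial_{pp} W$ is strictly positive, hence bounded below by a positive constant, and all partial derivatives of $\partial_p W$ appearing in the expressions for $\partial_X^{l+1} \pi_0$ and $\partial_S \partial_X^l \pi_0$ are uniformly bounded. Composition with $\pi_0$ then yields uniform bounds for these two derivatives on $[0, L_0] \times [S_0, S_1]$, and the mean value theorem applied separately in $X$ and in $S$ delivers the asserted Lipschitz estimates, with constants depending only on bounds on the derivatives of $\partial_p W$ over $[0, L_0] \times [P_0, P_1]$.

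The only nontrivial bookkeeping is the inductive verification that $\partial_X^l \pi_0$ has the described rational structure and that no derivative of $\partial_p W$ of order exceeding $l + 1$ enters into $\partial_X^{l+1} \pi_0$ or $\partial_S \partial_X^l \pi_0$; this confirms that the regularity budget $k + m$ with $m \geq 1$ is exactly enough for $0 \leq l \leq k$. The rest is a routine compactness-plus-mean-value argument, and I do not foresee any conceptual obstacle beyond careful tracking of the Faà di Bruno indices.
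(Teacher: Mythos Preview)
Your proposal is correct and follows essentially the same route as the paper: implicit function theorem applied to $\partial_p W(X,p)=S$, the a priori confinement $\pi_0\in[P_0,P_1]$ from Lemma~\ref{LemmaChapter Regularity Coercivity Bounds}, the inductive rational structure of $\partial_X^l\pi_0$ with powers of $\partial_{pp}W$ in the denominator, and the mean value theorem on the resulting compact rectangle. The only organisational difference is that you invoke the implicit function theorem jointly in $(X,S)$ to get $\pi_0\in C^{k+m}$ at once, whereas the paper treats the $X$- and $S$-derivatives separately (computing $\partial_S\pi_0=(\partial_{pp}W)^{-1}$ and then $\partial_{XS}\pi_0$ and appealing to Schwarz); this is a minor streamlining, not a different argument.
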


\begin{proof}
    For the case $l=0$ this statement is already established in Lemma 4 in \cite{Bangert.Dolzmann.2023}. We first consider $S\in[S_0,S_1]$ fixed and prove the first statement. We show that $\partial_X^{l+1}\pi_0(\cdot,S)$ is uniformly bounded on $[0,L_0]$ by a constant independent of the choice of $S$ which proves Lipschitz continuity by the mean value theorem. Suppose that $(X_0,p_0)\in (0,L_0)\times (0,\infty)$ is a solution of the equation $\partial_p W(X,p)=S$. Since $\partial_p W\in C^{k+1}((0,L_0)\times (0,\infty))$ with $\partial_{pp}W>0$ we can apply the theorem of implicit functions. Thus, there exist an open neighborhood $U$ of $X_0$ in $(0,L_0)$ and an open neighborhood of $p_0$ in $(0,\infty)$ and a $k+1$ times differentiable function $p(\cdot,S):U\to V$ such that on $U\times V$ the equation $\partial_p W(X,p)=S$ holds if and only if $p=p(X,S)$. Because the solution of this equation is uniquely determined, it follows that $p(\cdot,S)=\pi_0(\cdot,S)$ on $U$ and, because $X_0\in(0,L_0)$ is arbitrary, $p(\cdot,S)=\pi_0(\cdot,S)\in C^{k+1}(0,L_0)$. As in \cite{Bangert.Dolzmann.2023}, Lemma \ref{LemmaChapter Regularity Coercivity Bounds} implies with $E=(0,L_0)$ and $\pi(\cdot)=\pi_0(\cdot,S)$ that the function $\pi_0(\cdot,S)$ is uniformly bounded with values in $[P_0,P_1]$. By implicit differentiation we obtain
    that
    \begin{equation*}
        \partial_X\pi_0(X,S)=-\frac{\partial_{Xp}W(X,\pi_0(X,S))}{\partial_{pp}W(X,\pi_0(X,S))}
    \end{equation*}
    and since $\pi_0(\cdot,S)$ is uniformly bounded, there exist a constant $c_0>0$ with $\partial_{pp}W\geq c_0$ on on $[0,L_0]\times [P_0,P_1]$. Therefore, $\partial_X \pi_0(\cdot,S)$ is uniformly bounded on $(0,L_0)$ and these bounds imply that the Cauchy criterion for the existence of the limits $\lim_{X\to 0}\pi_0(X,S)$ and $\lim_{X\to L_0}\pi_0(X,S)$ is satisfied. Consequently, $\partial_X\pi_0(\cdot,S)\in C^{0}([0,L_0])$. The local Lipschitz continuity of $\pi_0(\cdot, S)$ in the first argument follows by the mean value theorem and (W4). Inductively computing the derivatives of order $j$ for $0\leq j\leq l+1$ and arguing as before proves the first assertion. Note that in every step, the denominator is bounded from below by $c_0^{2^j}$ and only derivatives up to order $l$ of $W_p$ appear in the numerator. 

    To prove the second assertion we note that, for fixed $X\in [0,L_0]$ and $S\in\RR$, as shown in the proof of Lemma 4 in \cite{Bangert.Dolzmann.2023},
    \begin{equation}\label{section1additionaltimeregularitypi0}
        \partial_S\pi_0(X,S)=(\partial_{pp}W(X,\pi_0(X,S)))^{-1}
    \end{equation}
    which is uniformly bounded as a function of $S$ on compact intervals.  Due to (W4) and the fact that $\pi_0(\cdot,S)\in C^{k}([0,L_0])$ as shown above, we can compute
    \begin{equation*}
        \partial_{XS}\pi_0(X,S)=-\frac{\partial_{Xpp}W(X,\pi_0(X,S))+\partial_{ppp}W(X,\pi_0(X,S))\partial_X\pi_0(X,S)}{(\partial_{pp}W(X,\pi_0(X,S)))^2}
    \end{equation*}
    which is uniformly bounded for $X\in [0,L_0]$ and $S\in\RR$ on compact intervals. By Schwartz' theorem $\partial_{SX}\pi_0(X,S)=\partial_{XS}\pi_0(X,S)$. Hence, we proved the second assertion for the case $l=1$. We proceed inductively for $2\leq l\leq k$ and argue as above. Note that by the quotient rule the denominator is bounded by $c_0^{2^l}$ in every step and that in the numerator derivatives of $W_p$ of order at most $l+1$ of $W_p$ appear. This proves the second claim.
\end{proof}

Arguing as in \cite{Bangert.Dolzmann.2023}, where Theorems 2 and 4 from \cite{Ball.1981} are applied and using the bootstrap method, we obtain the following theorem.

\begin{proposition}\label{Proposition Chapter Regularity Existence of Minimisers}
    Suppose that $G\in B_1$ where $B_1$ satisfies the assumptions in Definition \ref{defB1} and that $W$ satisfies (W1)-(W4). Then, there exists a unique minimizer $\phi_G$ of $I_G$ in the space $\mathcal{A}_G$. Moreover, $\phi_G\in C^{k+1}([g(0),g(L_0)])$ with $\min_{[g(0),g(L_0)]}\phi_G'>0$ and the Euler-Lagrange equation
    \begin{equation*}
        \frac{\partial}{\partial z} (\partial_p W_G(z,\phi_G'(z)))=0
    \end{equation*}
    holds on $[g(0),g(L_0)]$. In particular, the stress $\partial_p W_G(z,\phi_G'(z))$ is constant on $[g(0),g(L_0)]$.
\end{proposition}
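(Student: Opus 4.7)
The plan is the two-stage strategy indicated by the author: first obtain a minimizer of modest regularity by invoking Ball's variational results from \cite{Ball.1981}, then lift the regularity to $C^{k+1}$ via a bootstrap based on the Euler-Lagrange equation and Lemma \ref{1LipschitzCk}. For existence, the assumption $G\in B_1$ (cf.\ Definition \ref{defB1}) provides the uniform bounds $\Gamma_0\leq G\leq\Gamma_1$, making $[g(0),g(L_0)]$ a non-degenerate interval, and together with (W1)-(W3) -- the convex lower bound $\theta$ that blows up at $0$ and is superlinear at infinity, plus the integrability in (W3) which ensures $\mathcal{A}_G\neq\emptyset$ via e.g.\ the affine competitor $z\mapsto l_0(z-g(0))/(g(L_0)-g(0))$ -- the hypotheses of Theorem 2 in \cite{Ball.1981} are met. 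The direct method then yields a minimizer $\phi_G\in\mathcal{A}_G$, and the strict convexity of $W(X,\cdot)$ forces uniqueness.

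Next, Theorem 4 in \cite{Ball.1981} (together with the coercivity bound of Lemma \ref{LemmaChapter Regularity Coercivity Bounds}) provides the crucial a priori estimate $\phi_G'(z)\in[P_0,P_1]$ almost everywhere, with $0<P_0\leq P_1<\infty$ depending only on the data. On this compact range $W$ is $C^1$ by (W1), so admissible perturbations of $\phi_G$ are permitted and yield the weak Euler-Lagrange equation; this integrates to the statement that $\partial_p W_G(z,\phi_G'(z))$ is constant on $[g(0),g(L_0)]$, with common value $S_G$.

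For the bootstrap, (W4) ensures that $p\mapsto\partial_p W(X,p)$ is strictly increasing, so inverting the Euler-Lagrange identity via the function $\pi_0$ of Lemma \ref{1LipschitzCk} yields the pointwise representation $\phi_G'(z)=\pi_0(g^{-1}(z),S_G)$. Since $G\in C^k([0,L_0])$ with $G\geq\Gamma_0>0$, integration gives $g\in C^{k+1}([0,L_0])$ and the inverse function theorem then yields $g^{-1}\in C^{k+1}([g(0),g(L_0)])$. The proof of Lemma \ref{1LipschitzCk}, via the implicit function theorem applied to $\partial_p W(X,p)=S_G$ under (W4), shows that $\pi_0(\cdot,S_G)\in C^{k+1}$ in its first argument. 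The composition $z\mapsto\pi_0(g^{-1}(z),S_G)$ therefore lies in $C^k$, so $\phi_G'\in C^k$, i.e.\ $\phi_G\in C^{k+1}$, and the uniform lower bound $\phi_G'\geq P_0>0$ is inherited from the a priori estimate.

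The main obstacle is the a priori estimate $P_0\leq\phi_G'\leq P_1$, without which neither the Euler-Lagrange equation nor the representation via $\pi_0$ is available. This is the delicate part of Ball's 1981 argument: the singularity of $W$ as $p\searrow 0$ prevents the use of arbitrary smooth variations around $\phi_G$, and ruling out degeneration of $\phi_G'$ on sets of positive measure requires carefully chosen one-sided competitors that preserve admissibility while producing contradictions with minimality. Once this bound is secured, the constancy of the stress and the chain of regularity improvements are routine.
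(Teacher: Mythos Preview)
Your proposal is correct and follows the same two-stage template as the paper---Ball's results for existence, the a priori bound $\phi_G'\in[P_0,P_1]$, and the Euler--Lagrange equation, followed by a bootstrap under (W4)---but the bootstrap is organized differently. The paper works directly in the $z$-variable: from the constancy of the stress and (W4) it first gets $\phi_G'\in C^1$ via the implicit function theorem, then differentiates to obtain
\[
\phi_G''(z)=-\frac{\partial_{Xp}W_G(z,\phi_G'(z))}{\partial_{pp}W_G(z,\phi_G'(z))},
\]
and iterates, gaining one derivative at each step up to $C^{k+1}$. You instead invert the Euler--Lagrange identity once to obtain the explicit representation $\phi_G'(z)=\pi_0(g^{-1}(z),S_G)$ and read off $C^{k}$-regularity of $\phi_G'$ in a single stroke from the $C^{k+1}$-regularity of $\pi_0(\cdot,S_G)$ established in (the proof of) Lemma~\ref{1LipschitzCk} and of $g^{-1}$. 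Your route is a bit more economical since it recycles the implicit-differentiation work already done for $\pi_0$, whereas the paper's iterative argument is self-contained and makes the dependence on the order of derivatives of $W$ appearing at each step more explicit; both yield the same conclusion.
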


\begin{proof}
    By Corollary 2 in \cite{Bangert.Dolzmann.2023} it remains to prove that $\phi_G\in C^{k+1}([g(0),g(L_0)])$. As in Remark 1 in \cite{Bangert.Dolzmann.2023} we can deduce that if (W1)-(W3) hold for $W$, then also for $W_G$ for $G\in B_1$ satisfying the assumptions in Definition \ref{defB1}. Moreover, (W4) also holds for $W_G$ if it holds for $W$ as $g'=G\geq \Gamma_0>0$ with $g\in C^{k+1}([0,L_0])$. By the implicit function theorem, it follows from the fact that the stress is constant, $(W4)$ and that $\partial_{pp}W_G>0$ that $\phi_G'\in C^{1}([g(0),g(L_0)])$. Hence, differentiating and dividing by $\partial_{pp}W_G(X,\phi_G'(X))$ yields for $X\in [g(0),g(L_0)]$
    \begin{equation}\label{equation bootstrap higher regularity minimizer}
        \phi_G''(X)=-\frac{\partial_{Xp}W_G(X,\phi_G'(X))}{\partial_{pp}W_G(X,\phi_G'(X))}.
    \end{equation}
    Note, that the lower bound on $\partial_{pp}W_G(X,\phi_G'(X))\geq c_0>0$ for $X\in [g(0),g(L_0)]$ as the map $X\mapsto \partial_{pp}W_G(X,\phi_G'(X))$ is continuous and defined on a compact interval. Now, since the right-hand side is again differentiable due to (W4) and the fact that $\phi_G\in C^{2}([g(0),g(L_0)])$, we can deduce that $\phi_G\in C^{3}([g(0),g(L_0)])$. Proceeding inductively and taking into account that derivatives of order at most $k+1$ of $W_G$ appear on the right-hand if we further differentiate \ref{equation bootstrap higher regularity minimizer}, we can deduce that $\phi_G\in C^{(k+1)}([g(0),g(L_0)])$. Note, that in the denominator on the right-hand side, only powers of $\partial_{pp}W_G(X,\phi_G'(X))$ appear which are all uniformly bounded from below by $c_0^{2^k}>0$. The uniqueness follows from the strict convexity of $W_G(z,\cdot)$.
\end{proof}

\begin{lemma}\label{boundednessofminimiserinCkplus1}
    Suppose that $W$ satisfies (W1)-(W4) and that $B_1$ satisfies the assumptions in Definition \ref{defB1}. Then, there exist constants $P_0,P_1,P_2,\Sigma_0,\Sigma_1\in\RR$ with $0<P_0$ such that for $G\in B_1$ and $\phi_G\in C^{k+1}([g(0),g(L_0)])$, denoting the minimizer obtained in Proposition \ref{Proposition Chapter Regularity Existence of Minimisers}, $S_G\in [\Sigma_0,\Sigma_1]$, $\phi_G'\in [P_0,P_1]$ on $[g(0),g(L_0)]$ and $\norm{\phi_G}_{C^{k+1}([g(0),g(L_0)])}\leq P_2$.
\end{lemma}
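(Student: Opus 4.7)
The plan is to establish the three bounds in sequence, first controlling the constant stress $S_G$, then transferring this to a pointwise bound on $\phi_G'$ via the coercivity lemma, and finally iterating the bootstrap identity from the proof of Proposition \ref{Proposition Chapter Regularity Existence of Minimisers} to control derivatives of $\phi_G$ of all orders up to $k+1$. Throughout, uniformity in $G\in B_1$ is obtained by exploiting that $\Gamma_0\le G\le \Gamma_1$ pointwise and that $G$ is uniformly bounded in $C^k$.

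For the stress bound I would argue by the mean value theorem. The Dirichlet conditions give $\phi_G(g(L_0))-\phi_G(g(0))=l_0$, so there exists $z^{*}=g(X^{*})\in[g(0),g(L_0)]$ with $\phi_G'(z^{*})=l_0/(g(L_0)-g(0))$. Since $g(L_0)-g(0)=\int_0^{L_0}G(X)\,dX\in[\Gamma_0 L_0,\Gamma_1 L_0]$, the value $\phi_G'(z^{*})$ lies in the fixed compact subinterval $[l_0/(\Gamma_1 L_0),l_0/(\Gamma_0 L_0)]\subset(0,\infty)$ depending only on the global constants. Because the stress is constant along the rod, $S_G=\partial_p W(X^{*},\phi_G'(z^{*}))$, and by continuity of $\partial_p W$ on the compact set $[0,L_0]\times[l_0/(\Gamma_1 L_0),l_0/(\Gamma_0 L_0)]$ one obtains $\Sigma_0,\Sigma_1\in\RR$, depending only on the global constants, with $S_G\in[\Sigma_0,\Sigma_1]$. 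Applying Lemma \ref{LemmaChapter Regularity Coercivity Bounds} with $E=[0,L_0]$ and $\pi=\pi_0(\cdot,S_G)$ and using the uniform bound on $S_G$ then yields constants $0<P_0\le P_1<\infty$, depending only on $\theta$, $\Sigma_0$, $\Sigma_1$, such that $\phi_G'(g(X))=\pi_0(X,S_G)\in[P_0,P_1]$ for every $X\in[0,L_0]$, and hence $\phi_G'\in[P_0,P_1]$ on the whole interval $[g(0),g(L_0)]$.

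For the $C^{k+1}$-bound I would iterate the identity
\begin{equation*}
\phi_G''(z)=-\frac{\partial_{Xp}W_G(z,\phi_G'(z))}{\partial_{pp}W_G(z,\phi_G'(z))}
\end{equation*}
derived in the proof of Proposition \ref{Proposition Chapter Regularity Existence of Minimisers}. The key point is that $W_G(z,p)=W(g^{-1}(z),p)$ and that $G\in B_1$ gives a uniform $C^k$-bound on $G$ together with $G\ge\Gamma_0>0$, hence a uniform $C^{k+1}$-bound on $g^{-1}$. Consequently, all partial derivatives of $W_G$ up to order $k+1$ which enter the iterated formulas are uniformly bounded on the compact set $\{(z,p):z\in[g(0),g(L_0)],\,p\in[P_0,P_1]\}$, while (W4) combined with the pointwise bound $\phi_G'\in[P_0,P_1]$ and compactness ensures a uniform lower bound $\partial_{pp}W_G(z,\phi_G'(z))\ge c_0>0$; after $j$ successive differentiations the denominator is at worst $(\partial_{pp}W_G)^{2^j}$, which remains uniformly positive. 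Induction on the order of differentiation, exactly as in the proof of Proposition \ref{Proposition Chapter Regularity Existence of Minimisers}, then produces the desired bound $\|\phi_G\|_{C^{k+1}([g(0),g(L_0)])}\le P_2$, with $P_2$ depending only on the global constants.

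The main obstacle is purely organisational: tracking that the inductive bootstrap keeps the denominators uniformly away from zero and that only derivatives of $W$ of order at most $k+1$ enter the numerators, both of which are already handled in the proof of Proposition \ref{Proposition Chapter Regularity Existence of Minimisers}. Once the stress has been trapped in the compact interval $[\Sigma_0,\Sigma_1]$ by the mean value argument above, everything else is a straightforward application of the compactness and continuity machinery together with Lemma \ref{LemmaChapter Regularity Coercivity Bounds}.
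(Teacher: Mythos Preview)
Your argument is correct. The mean-value argument for trapping $S_G$ in a compact interval, followed by Lemma~\ref{LemmaChapter Regularity Coercivity Bounds} for the pointwise bounds $P_0\le\phi_G'\le P_1$, is exactly what underlies the reference the paper cites for those two statements, and your bootstrap iteration for the $C^{k+1}$-bound is sound: the uniform $C^{k+1}$-control on $g^{-1}$ (from $G\in B_1$ with $G\ge\Gamma_0$) together with (W4) and $\phi_G'\in[P_0,P_1]$ keeps all numerators bounded and the denominator uniformly positive.

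The only organisational difference from the paper is in the last step. Rather than bootstrapping the Euler--Lagrange identity on the virtual interval $[g(0),g(L_0)]$ as you do, the paper pulls everything back to the fixed reference interval via the relation $\phi_G'(g(X))=\pi_0(X,S_G)$ and then simply quotes Lemma~\ref{1LipschitzCk}, whose proof already carries out the implicit-differentiation bootstrap on $\pi_0(\cdot,S)$ with uniform constants for $S\in[\Sigma_0,\Sigma_1]$. Your route redoes that same computation in the $z$-variable; the paper's route is shorter because Lemma~\ref{1LipschitzCk} has been proved separately and can be reused. Mathematically the two are the same argument in different coordinates.
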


\begin{proof}
    It follows immediately from Lemma 5 in \cite{Bangert.Dolzmann.2023} that $S_G\in [\Sigma_0,\Sigma_1]$ and that $\phi_G'\in [P_0,P_1]$ on $[g(0),g(L_0)]$. It remains to show that $\norm{\phi_G}_{C^{k+1}([g(0),g(L_0)])}\leq P_2$. As in \cite{Bangert.Dolzmann.2023} the assertion follows from the fact that $\pi(X)=\phi_G'(g(X))$ for $X\in [0,L_0]$ since $g:[0,L_0]\to [g(0),g(L_0)]$ is bijective and according to Lemma \ref{1LipschitzCk} $\norm{\pi}_{C^k([0,L_0] \times \RR)}\leq P_2$.
\end{proof}

\begin{lemma}\label{1lipschitzdependenceofthestressonthegrowthtensor}
    Suppose that $W$ satisfies (W1)-(W4) and that $B_1$ satisfies the assumptions in Definition \ref{defB1}. Then, the map $S:B_1\to\RR$, $G\mapsto S(G)$ is of class $C^1$ and there exist constants $M_{S,G}$ and $L_{S,G}$ which depend only on the global constants such that
    \begin{equation*}
        \norm{S}_{C^0(B_1)}\leq M_{S,G} 
        \qquad \textup{ and } \qquad
        \norm{\frac{\partial S}{\partial G}(G)}_{\mathcal{L}(C^k([0,L_0]);\RR)}\leq L_{S,G}.
    \end{equation*}
    In particular, $S$ is globally Lipschitz continuous on $B_1$ with Lipschitz constant $L_{S,G}$.
\end{lemma}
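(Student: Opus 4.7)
The plan is to identify $S(G)$ as the unique root of a scalar equation on $\RR$ and then apply the Banach-space implicit function theorem on $C^k([0,L_0]) \times \RR$. By Proposition \ref{Proposition Chapter Regularity Existence of Minimisers}, the stress $S=S(G)$ is constant along $[g(0),g(L_0)]$ and the minimizer satisfies $\phi_G'(g(X)) = \pi_0(X,S(G))$. Integrating this identity, using the Dirichlet condition $\phi_G(g(L_0)) - \phi_G(g(0)) = l_0$ and the change of variables $z = g(X)$, $dz = G(X)\,dX$, one obtains
\begin{equation*}
F(G,S) := \int_0^{L_0} \pi_0(X,S)\,G(X)\,dX - l_0 = 0,
\end{equation*}
which, together with Lemma \ref{boundednessofminimiserinCkplus1}, characterizes $S(G) \in [\Sigma_0,\Sigma_1]$ uniquely. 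The bound $\|S\|_{C^0(B_1)} \leq M_{S,G} := \max(|\Sigma_0|,|\Sigma_1|)$ is then immediate, and $M_{S,G}$ depends only on the global constants.

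To apply the implicit function theorem I would verify that $F \colon B_1 \times (\Sigma_0-1,\Sigma_1+1) \to \RR$ is of class $C^1$ with $\partial_S F \neq 0$ at every root. Since $F$ is affine in $G$ its Fréchet derivative in the first slot is $\partial_G F(G,S)[H] = \int_0^{L_0} \pi_0(X,S)\,H(X)\,dX$ for $H \in C^k([0,L_0])$; by Lemma \ref{LemmaChapter Regularity Coercivity Bounds}, $\pi_0(\cdot,S)$ stays in a fixed compact set $[P_0,P_1]$ uniformly for $S \in [\Sigma_0,\Sigma_1]$, and combined with (\ref{section1additionaltimeregularitypi0}) and Lemma \ref{1LipschitzCk} this gives joint continuity of both partials. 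Using $\partial_S \pi_0 = 1/\partial_{pp}W$ we get
\begin{equation*}
\partial_S F(G,S) = \int_0^{L_0} \frac{G(X)}{\partial_{pp} W(X,\pi_0(X,S))}\,dX,
\end{equation*}
and the crucial step is a uniform lower bound: continuity of $\partial_{pp} W$ on the compact set $[0,L_0]\times[P_0,P_1]$ yields an upper bound $C_{pp}$ there, while $G \geq \Gamma_0 > 0$ on $B_1$, whence
\begin{equation*}
\partial_S F(G,S) \geq \frac{\Gamma_0 L_0}{C_{pp}} > 0
\end{equation*}
uniformly in $(G,S) \in B_1 \times [\Sigma_0,\Sigma_1]$. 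This uniform invertibility is the main technical obstacle and is precisely where the strict positivity demanded in (W4) is used.

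Combining both items, the implicit function theorem applied at every $(G,S(G))$ yields a local $C^1$ root, and uniqueness of the root glues these to a global map $S \in C^1(B_1;\RR)$ with
\begin{equation*}
\frac{\partial S}{\partial G}(G)[H] = -\frac{\displaystyle\int_0^{L_0} \pi_0(X,S(G))\,H(X)\,dX}{\displaystyle\int_0^{L_0} G(X)/\partial_{pp} W(X,\pi_0(X,S(G)))\,dX}.
\end{equation*}
The numerator is bounded in modulus by $P_1 L_0 \|H\|_{C^0} \leq P_1 L_0 \|H\|_{C^k}$ while the denominator exceeds $\Gamma_0 L_0/C_{pp}$, so $\|\partial S/\partial G(G)\|_{\mathcal{L}(C^k([0,L_0]);\RR)} \leq L_{S,G} := P_1 C_{pp}/\Gamma_0$, a constant depending only on the global constants. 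Since $B_1$ is convex as an intersection of balls, global Lipschitz continuity of $S$ on $B_1$ with constant $L_{S,G}$ then follows from the mean value inequality for Fréchet-differentiable maps.
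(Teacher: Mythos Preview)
Your proof is correct and follows essentially the same route as the paper. The paper's own proof merely cites Lemma~6 in \cite{Bangert.Dolzmann.2023} together with the inclusion $C^k([0,L_0])\subset \mathcal{L}^\infty([0,L_0])$, and as one can see from the proof of Lemma~\ref{additionaltimeregularitystresses1dsection1} later in the paper, that cited lemma proceeds exactly by applying the implicit function theorem to the scalar map $\Phi(S,G)=\int_0^{L_0}\pi_0(X,S)G(X)\,dX-l_0$, which is precisely your $F(G,S)$; you have thus reconstructed the argument behind the reference, including the uniform lower bound on $\partial_S\Phi$ coming from $\partial_{pp}W>0$ and $G\geq\Gamma_0$.
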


\begin{proof}
    This Lemma is an immediate consequence of Lemma 6 in \cite{Bangert.Dolzmann.2023} because $C^k([0,L_0])\subset\mathcal{L}^{\infty}([0,L_0])$. Note in particular that the assumptions imposed on $B_1$ in Definition \ref{defB1} are more restrictive than the assumptions imposed on the ball $B_1$ in equation (1.7) in \cite{Bangert.Dolzmann.2023}. 
\end{proof}

\section{Lipschitz Dependence of the Nutrient Concentration}

In order to prove higher differentiability of solutions in the sense of Definition \ref{Definition Chapter Regularity Definition of Solutions} we also have to prove that the function $N$ denoting nutrient concentration pulled back to the reference configuration depends Lipschitz continuously on the growth tensor with respect to the $C^k$-norm. The goal of this section is to show the following lemma.
\begin{lemma}\label{1Lipschitzdependencenutrients}
    Suppose $W$ satisfies (W1)-(W4) and that $B_1$ satisfies the assumptions in Definition \ref{defB1}. The map $N:B_1\to C^k([0,L_0])$, $G\mapsto N(G)=n_G\circ y_G$, is Lipschitz continuous and uniformly bounded, that is, there exist constants $M_{N,G}$ and $L_{N,G}$ depending only on the global constants such that for all $G, G_1, G_2\in B_1$
    \begin{equation*}
        \norm{N(G)}_{C^k([0,L_0])}\leq M_{N,G}
    \qquad \textup{ and } \qquad
        \norm{N(G_1)-N(G_2)}_{C^k([0,L_0])}\leq L_{N,G}\norm{G_1-G_2}_{C^k([0,L_0])}.
    \end{equation*}
\end{lemma}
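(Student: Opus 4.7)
The strategy is to pull the nutrient equation back from the current configuration all the way to the reference configuration $[0, L_0]$ via the total deformation $y_G = \phi_G \circ g$. Writing the current-configuration equation $\partial_x(D \partial_x n) = -\beta n$ in weak form with test functions $\eta = \tilde{\eta} \circ y_G^{-1}$ and substituting $x = y_G(X)$, the constitutive assumptions \eqref{1assumptionsD}--\eqref{1assumpitonsbeta} produce the key cancellations
\begin{equation*}
    \frac{D(y_G(X))}{y_G'(X)} = \frac{D_0(X)}{G(X)}, \qquad \beta(y_G(X))\, y_G'(X) = \beta_0(X)\, G(X),
\end{equation*}
so that $N(G) = n_G \circ y_G$ is the unique classical solution of the \emph{$\phi_G$-independent} linear two-point boundary value problem
\begin{equation*}
    -\partial_X\!\left(\frac{D_0}{G}\,\partial_X N\right) + \beta_0\, G\, N = 0 \text{ on } (0, L_0), \qquad N(0) = n_L, \quad N(L_0) = n_R.
\end{equation*}
After this reduction, the lemma becomes a smooth dependence statement for a parametrized family of linear, uniformly elliptic boundary value problems on the fixed interval $[0, L_0]$.

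For the uniform $C^k$-bound, every $G \in B_1$ satisfies $\Gamma_0 \leq G \leq \Gamma_1$ as well as $\|G\|_{C^k} \leq \|G_0\|_{C^k} + R_k$, so using $D_0, \beta_0 \in W^{k+m}([0, L_0]) \hookrightarrow C^k([0, L_0])$ (recall $m \geq 1$) together with their pointwise bounds, the quotient $D_0/G$ and the product $\beta_0 G$ are uniformly bounded in $C^k([0, L_0])$, and $D_0/G \geq D_{\min}/\Gamma_1 > 0$ provides uniform ellipticity. After subtracting a linear lift of the Dirichlet data, a standard $H^1$-energy estimate followed by a one-dimensional bootstrap argument yields $N(G) \in C^{k+2}([0, L_0])$ with a bound $M_{N,G}$ depending only on the global constants.

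For Lipschitz continuity, take $G_1, G_2 \in B_1$, set $N_i = N(G_i)$ and $W = N_1 - N_2$, and subtract the two boundary value problems. Using $1/G_1 - 1/G_2 = (G_2 - G_1)/(G_1 G_2)$ and expanding $G_1 N_1 - G_2 N_2 = G_1 W + (G_1 - G_2) N_2$, one finds that $W$ satisfies
\begin{equation*}
    -\partial_X\!\left(\frac{D_0}{G_1}\,\partial_X W\right) + \beta_0\, G_1\, W = -\partial_X\!\left(D_0\,\frac{G_1 - G_2}{G_1 G_2}\,\partial_X N_2\right) - \beta_0\,(G_1 - G_2)\,N_2
\end{equation*}
with homogeneous Dirichlet boundary conditions $W(0) = W(L_0) = 0$. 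Exploiting the Banach-algebra property of $C^k([0, L_0])$, the lower bound $G_i \geq \Gamma_0$, and the $C^{k+1}$-bound on $N_2'$ from the previous step, the right-hand side is controlled in $C^{k-1}$ by $C \|G_1 - G_2\|_{C^k}$. A final application of elliptic regularity for the $G_1$-dependent operator, whose coefficients are uniformly $C^k$-bounded and uniformly elliptic on $B_1$, yields $\|W\|_{C^{k+1}} \leq L_{N,G}\,\|G_1 - G_2\|_{C^k}$ and hence the claimed $C^k$-Lipschitz bound.

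The conceptual crux is the change-of-variables computation in the first step, where one must verify that the constitutive choices \eqref{1assumptionsD}--\eqref{1assumpitonsbeta} were designed precisely to cancel the $\phi_G'$ factors, so that neither $\phi_G$ nor any of its derivatives appear in the transformed operator; once this is checked, the remaining work reduces to linear one-dimensional elliptic regularity together with careful bookkeeping of $C^k$-products and quotients. The assumption $D_0, \beta_0 \in W^{k+m}$ with $m \geq 1$ is used precisely to guarantee that the coefficients $D_0/G$ and $\beta_0 G$ retain enough regularity after $k$ derivatives have been distributed onto $G$, which is the most delicate point of the estimate.
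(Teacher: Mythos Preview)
Your proof is correct and takes a genuinely different route from the paper's. The paper works throughout in the \emph{current} configuration $[0,l_0]$: it first establishes, via a chain of auxiliary lemmas, that the maps $G\mapsto y_G$, $G\mapsto\phi_G'\circ\phi_G^{-1}$, $G\mapsto D_G$, $G\mapsto\beta_G$ are each $C^k$-Lipschitz on $B_1$; it then shows Lipschitz dependence of the Eulerian solution $G\mapsto n_G$ in $H^{k+1}(0,l_0)$ by differentiating the equation, testing with a polynomial-corrected difference, and invoking the coefficient estimates; finally it splits $N_1-N_2=(n_1\circ y_1-n_1\circ y_2)+(n_1\circ y_2-n_2\circ y_2)$ and controls each piece separately. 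Your pullback to the reference interval collapses this entire chain: the constitutive choices \eqref{1assumptionsD}--\eqref{1assumpitonsbeta} are precisely what make the $\phi_G'$-factors cancel, so the operator governing $N(G)$ has coefficients $D_0/G$ and $\beta_0 G$ that are explicit rational functions of $G$ alone, and the Lipschitz estimate becomes a single elliptic stability argument. This is shorter and conceptually cleaner for the lemma at hand; the price is that the paper's intermediate results (Lipschitz dependence of $D_G$, $\beta_G$, $n_G$, $y_G$) are reused later in the time-regularity analysis of Section~5, so the longer route is not wasted work in the context of the full paper.

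One small overclaim: since $G\in C^k$ only, the coefficients $D_0/G$ and $\beta_0 G$ are $C^k$ but not $C^{k+1}$, so the bootstrap yields $N(G)\in C^{k+1}$ rather than $C^{k+2}$, and correspondingly $\partial_X N_2\in C^k$ rather than $C^{k+1}$. This is harmless---$C^k$-control of $\partial_X N_2$ is exactly what your right-hand-side estimate and the final elliptic regularity step require, and the lemma only asks for a $C^k$-bound on $N$.
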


The domain of $n_G$ is the deformed configuration, that is, the interval $[0,l_0]$. Therefore, we first have to show Lipschitz continuity of the map $y_G=\phi_G\circ g$ in order to control the change of variables in $N_G=n_G\circ y_G$. Furthermore, the assumptions on the diffusion coefficient $D_G$ and the reaction rate $\beta_G$ as in (\ref{1assumptionsD}) and (\ref{1assumpitonsbeta}) give rise to the factor $\phi_G'\circ\phi_G^{-1}$.

\begin{lemma}\label{1LipschitzGtoyG}
    Suppose that $W$ satisfies (W1)-(W4) and that $B_1$ satisfies the assumptions in Definition \ref{defB1}. Then, there exist constants $L_{y,G}$ and $L_{\phi'\circ g,G}$ dependent only on the global constants such that for all $G_1,G_2\in B_1$
    \begin{enumerate}
        \item the map $y:B_1\to C^k([0,L_0])$, $G\mapsto y_G=\phi_G\circ g$ is Lipschitz continuous, that is,
        \begin{equation*}
            \norm{y_1-y_2}_{C^k([0,L_0])}=\norm{\phi_1\circ g_1-\phi_2\circ g_2}_{C^k([0,L_0])}\leq L_{y,G}\norm{G_1-G_2}_{C^k([0,L_0])},
        \end{equation*}
        \item the map $\phi'\circ g:B_1\to C^k([0,L_0])$, $G\mapsto \phi_G'\circ g$ is Lipschitz continuous, that is,
        \begin{equation*}
            \norm{\phi'_1\circ g_1-\phi'_2\circ g_2}_{C^k([0,L_0])}\leq L_{\phi'\circ g,G}\norm{G_1-G_2}_{C^k([0,L_0])}.
        \end{equation*}
    \end{enumerate}
\end{lemma}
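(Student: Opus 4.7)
The plan is to reduce both statements to explicit formulas for $y_G$ and $\phi_G'\circ g$ in terms of the inverse $\pi_0$ and the scalar stress $S_G$, after which the Lipschitz estimates follow from Lemmas \ref{1LipschitzCk}, \ref{1lipschitzdependenceofthestressonthegrowthtensor} and \ref{boundednessofminimiserinCkplus1}. The key starting observation is that Proposition \ref{Proposition Chapter Regularity Existence of Minimisers} forces the stress to be a spatial constant $S_G$, so inverting $\partial_p W$ by $\pi_0$ and substituting $z=g(X)$ gives
\begin{equation*}
(\phi_G'\circ g)(X)=\pi_0(X,S_G),\qquad y_G(X)=\int_0^X \pi_0(s,S_G)\,G(s)\,ds,
\end{equation*}
where the integral representation uses the Dirichlet condition $\phi_G(g(0))=0$. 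This eliminates the composition with the moving interval $[g(0),g(L_0)]$ and turns the task into controlling simple expressions in $X$, $G$ and $S_G$.

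For statement (2), because $S_G\in\mathbb{R}$ is constant in space, the $C^k$-norm of $\phi_G'\circ g$ is the $C^k$-norm of $X\mapsto \pi_0(X,S_G)$, with $\partial_X^j(\phi_G'\circ g)(X)=\partial_X^j\pi_0(X,S_G)$ for $0\leq j\leq k$. Applying the $S$-Lipschitz estimate of Lemma \ref{1LipschitzCk} at every order $j\leq k$, uniformly on the compact range $[\Sigma_0,\Sigma_1]$ of $S_G$ provided by Lemma \ref{boundednessofminimiserinCkplus1}, and chaining with Lemma \ref{1lipschitzdependenceofthestressonthegrowthtensor} yields
\begin{equation*}
\|\phi_{G_1}'\circ g_1-\phi_{G_2}'\circ g_2\|_{C^k([0,L_0])}\leq C(k)\,L_{\partial_p W^{-1},S}\,|S_{G_1}-S_{G_2}|\leq C(k)\,L_{\partial_p W^{-1},S}\,L_{S,G}\,\|G_1-G_2\|_{C^k([0,L_0])},
\end{equation*}
with a constant depending only on the global constants.

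For statement (1), I differentiate $y_G'=\pi_0(\cdot,S_G)\,G$ once and split
\begin{equation*}
y_{G_1}'-y_{G_2}'=\bigl[\pi_0(\cdot,S_{G_1})-\pi_0(\cdot,S_{G_2})\bigr]G_1+\pi_0(\cdot,S_{G_2})(G_1-G_2).
\end{equation*}
The Banach-algebra estimate $\|fh\|_{C^{k-1}}\leq c_k\|f\|_{C^{k-1}}\|h\|_{C^{k-1}}$, combined with the uniform $C^{k-1}$-bound on $G_1\in B_1$, the uniform bound $\|\pi_0(\cdot,S_{G_2})\|_{C^{k-1}}\leq\|\pi_0(\cdot,S_{G_2})\|_{C^k}\leq P_2$ from Lemma \ref{boundednessofminimiserinCkplus1}, and part (2) applied to the first summand, gives $\|y_{G_1}'-y_{G_2}'\|_{C^{\max(k-1,0)}}\lesssim \|G_1-G_2\|_{C^k}$. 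Since $y_{G_1}(0)=y_{G_2}(0)=0$, the fundamental theorem of calculus upgrades this to a $C^0$-bound $\|y_{G_1}-y_{G_2}\|_{C^0}\leq L_0\|y_{G_1}'-y_{G_2}'\|_{C^0}$, and combining these yields the claimed Lipschitz bound in $C^k([0,L_0])$ with constant $L_{y,G}$ depending only on the global constants.

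The main obstacle, such as it is, is bookkeeping rather than analysis: one must verify that every constant produced by the Banach-algebra product estimates, the chain and quotient rules hidden inside Lemma \ref{1LipschitzCk}, and the uniform bound on $\pi_0$ truly depends only on the global constants. Since $B_1$ enforces uniform $C^k$-bounds on $G$, Lemma \ref{boundednessofminimiserinCkplus1} enforces uniform $C^k$-bounds on $\pi_0(\cdot,S_G)$, and Lemma \ref{1lipschitzdependenceofthestressonthegrowthtensor} provides the final reduction to $\|G_1-G_2\|_{C^k}$, this tracking is routine.
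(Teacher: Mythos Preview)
Your proof is correct and follows essentially the same approach as the paper: both arguments hinge on the identity $(\phi_G'\circ g)(X)=\pi_0(X,S_G)$, apply Lemma~\ref{1LipschitzCk} for the $S$-Lipschitz dependence of $\partial_X^l\pi_0$ on the compact range $[\Sigma_0,\Sigma_1]$, chain with Lemma~\ref{1lipschitzdependenceofthestressonthegrowthtensor}, and then handle part~(1) via the chain-rule identity $y_G'=(\phi_G'\circ g)\cdot G$ together with a product/Banach-algebra estimate. The only cosmetic difference is that the paper cites Lemma~7 in \cite{Bangert.Dolzmann.2023} for the $C^0$-estimate on $y_1-y_2$, whereas you derive it directly from the integral representation and the shared boundary value $y_{G_i}(0)=0$; your version is slightly more self-contained but otherwise equivalent.
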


\begin{proof}
    According to Lemma \ref{boundednessofminimiserinCkplus1} there exist constants $P_0,P_1,P_2,\Sigma_1,\Sigma_2\in\RR$ such that $\phi'_G\in[P_0,P_1]$, $\norm{\phi'_G}_{C^k([g(0),g(L_0)])}\leq P_2$ and $S_G\in[\Sigma_0,\Sigma_1]$. As in the proof of Lemma 7 in \cite{Bangert.Dolzmann.2023} we can conclude that $(\phi_G'\circ g)(X)=\pi_0(X,S_G)$ for all $X\in [0,L_0]$. Let $0\leq l\leq k$. By Lemma \ref{boundednessofminimiserinCkplus1}, $\partial_X^l\pi_0$ is Lipschitz continuous as a function of $S$ on $[\Sigma_0,\Sigma_1]$. Therefore, by Lemma \ref{boundednessofminimiserinCkplus1},
    \begin{align*}
        \norm{\phi_1'\circ g_1-\phi_2'\circ g_2}_{C^k([0,L_0])}=&\sum_{l=0}^k\sup_{X\in[0,L_0]}|\partial_X^l\pi_0(X,S_1)-\partial_X^l\pi_0(X,S_2)|\\ \leq &C|S_1-S_2|\leq C\norm{G_1-G_2}_{C^k([0,L_0])}
    \end{align*}
    for a constant $C>0$ depending only on the global constants. This proves the second assertion. To prove the first assertion, we note that, as shown in the proof of Lemma 7 in \cite{Bangert.Dolzmann.2023}, $\norm{y_1-y_2}_{C^0([0,L_0])}\leq C\norm{G_1-G_2}_{L^\infty(0,L_0)}$. Furthermore, the chain rule together with the second assertion, by computing the $C^k$-norm of the concatenation explicitly, yield
    \begin{equation*}
        \norm{ y_1-y_2}_{C^k([0,L_0])}=\norm{\phi_1\circ g_1-\phi_2\circ g_2}_{C^k([0,L_0])} \leq  C\norm{G_1-G_2}_{C^k([0,L_0])}
    \end{equation*}
    as the concatenation and product of Lipschitz continuous functions is again Lipschitz continuous.
\end{proof}

The following Lemma provides some preliminary Lipschitz estimates. $1.$ and $3.$ will later be used to obtain sufficient regularity of the coefficients in the reaction-diffusion equation to apply elliptic regularity theory and $2.$ will be used to prove Lipschitz dependence of those coefficients on the growth tensor.
\begin{lemma}\label{1lipschitzof3functions}
    Suppose that $W$ satisfies (W1)-(W4), that $0\leq l\leq k$ and that $B_1$ satisfies the assumptions in Definition \ref{defB1}. Then there exist constants $L_{y,X}$, $L_{\phi'\circ\phi^{-1},x}$ and $L_{\phi'\circ g,X}$ depending only on the global constants such that for all $G\in B_1$, $x_1,x_2\in [0,l_0]$ and $X_1,X_2\in [0,L_0]$
    \begin{enumerate}
        \item $|y_G^{(l)}(X_1)-y_G^{(l)}(X_2)|\leq L_{y,X}|X_1-X_2|$,
        \item $|(\phi_G'\circ\phi_G^{-1})^{(l)}(x_1)-(\phi_G'\circ\phi_G^{-1})^{(l)}(x_2)|\leq L_{\phi'\circ\phi^{-1},x}|x_1-x_2|$ and
        \item $|(\phi_G'\circ g)^{(l)}(X_1)-(\phi_G'\circ g)^{(l)}(X_2)|\leq L_{\phi'\circ g,X}|X_1-X_2|$.
    \end{enumerate}
\end{lemma}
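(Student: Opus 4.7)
The strategy for all three items is the same: reduce each Lipschitz estimate for the $l$-th derivative to a uniform sup-norm estimate on the $(l+1)$-st derivative over $B_1$, and then invoke the mean value theorem. Concretely, if a family $\{f_G\}_{G\in B_1}$ admits a bound $\|f_G\|_{C^{l+1}}\le C$ with $C$ depending only on the global constants, then $|f_G^{(l)}(x_1)-f_G^{(l)}(x_2)|\le C|x_1-x_2|$, and taking $l$ up to $k$ gives the asserted estimate. The task is therefore to establish a uniform $C^{k+1}$ (or at least $W^{k+1,\infty}$) bound for each of the three families under consideration.

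For statement 1., the plan is to use $y_G=\phi_G\circ g$. Lemma \ref{boundednessofminimiserinCkplus1} provides $\|\phi_G\|_{C^{k+1}([g(0),g(L_0)])}\le P_2$, and $\|g\|_{C^{k+1}([0,L_0])}\le L_0+\|G\|_{C^k([0,L_0])}$ is uniformly bounded on $B_1$ by Definition \ref{defB1}. Fa\`a di Bruno's formula then yields a uniform $C^{k+1}$ bound on $y_G$, providing $L_{y,X}$. For statement 3., I would invoke the identity $(\phi_G'\circ g)(X)=\pi_0(X,S_G)$ already used in the proof of Lemma \ref{1LipschitzGtoyG}. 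Differentiating $l$ times in $X$ gives $(\phi_G'\circ g)^{(l)}(X)=\partial_X^l\pi_0(X,S_G)$, and Lemma \ref{1LipschitzCk} supplies the Lipschitz estimate in $X$, uniformly over $S_G\in[\Sigma_0,\Sigma_1]$---the range of $S_G$ on $B_1$ guaranteed by Lemma \ref{boundednessofminimiserinCkplus1}---which yields $L_{\phi'\circ g,X}$.

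Statement 2. is the delicate one and I expect it to be the main obstacle. A direct chain-rule expansion of $\phi_G'\circ\phi_G^{-1}$ produces derivatives of $\phi_G$ of order up to $l+2$, and for $l=k$ this would require $\phi_G\in C^{k+2}$, which goes beyond what Proposition \ref{Proposition Chapter Regularity Existence of Minimisers} delivers. The remedy is to exploit the constancy of the stress, which converts the problematic composition into
\begin{equation*}
    (\phi_G'\circ \phi_G^{-1})(x) \;=\; \pi_0\bigl(g^{-1}(\phi_G^{-1}(x)),\,S_G\bigr) \;=\; \pi_0\bigl(y_G^{-1}(x),\,S_G\bigr), \qquad x\in[0,l_0].
\end{equation*}
Now $\pi_0(\cdot,S_G)$ has a uniform $W^{k+1,\infty}$ bound on $[0,L_0]$ by Lemma \ref{1LipschitzCk}, and by statement 1. the map $y_G\in C^{k+1}([0,L_0])$ is uniformly bounded with $y_G'(X)=\phi_G'(g(X))G(X)\ge P_0\Gamma_0>0$ uniformly on $B_1$, whence $y_G^{-1}\in C^{k+1}([0,l_0])$ with a uniform bound via the standard inverse-function calculus. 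Composing and applying Fa\`a di Bruno then yields a uniform $C^{k+1}$ bound on $\phi_G'\circ\phi_G^{-1}$, and the Lipschitz claim follows from the general scheme above.

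The heart of the argument is the replacement of $\phi_G$---whose regularity is only known to be $C^{k+1}$ from the bootstrap---by the pair $(\pi_0,y_G)$, where $\pi_0$ carries the extra regularity afforded by assumption (W4) and $y_G$ is controlled by the chain rule together with statement 1. This exchange moves one derivative off the awkward composition $\phi_G'\circ\phi_G^{-1}$ and onto $\pi_0$, which has the additional smoothness needed for the estimate to close. Once this reformulation is in place, the remainder is bookkeeping via Fa\`a di Bruno, using only the a priori bounds supplied by Lemma \ref{boundednessofminimiserinCkplus1} and Lemma \ref{1LipschitzCk}.
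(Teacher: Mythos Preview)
Your proposal is correct and follows essentially the same approach as the paper: items 1 and 3 are handled exactly as in the paper (via the uniform $C^{k+1}$ bounds on $\phi_G$ and $g$, respectively the identity $(\phi_G'\circ g)(X)=\pi_0(X,S_G)$ combined with Lemma \ref{1LipschitzCk}), and for item 2 the paper uses the same key rewriting $\phi_G'\circ\phi_G^{-1}=(\phi_G'\circ g)\circ y_G^{-1}=\pi_0(\cdot,S_G)\circ y_G^{-1}$ together with the uniform $C^{k+1}$ control on $y_G^{-1}$ coming from item 1 and the lower bound $y_G'\ge P_0\Gamma_0$. Your diagnosis of the regularity obstruction in item 2 and its resolution via the stress identity is precisely the mechanism the paper employs.
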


\begin{proof}
    By Lemma \ref{boundednessofminimiserinCkplus1} there exist $P_2,\Sigma_1,\Sigma_2\in\RR$ such that $\norm{\phi'_G}_{C^k([g(0),g(L_0)])}\leq P_2$ and $S_G\in[\Sigma_0,\Sigma_1]$. Since for all $X\in [0,L_0]$ $g'(X)=G(X)$ with $G\in B_1$ both $g$ and $\phi_G$ are Lipschitz continuous and bounded with respect to the $C^k$-norm and hence, the map $X\mapsto y^{(l)}_G(X)$ is also Lipschitz continuous with Lipschitz constant $L_{y,X}$ depending only on the global constants. To prove the third claim, we remind ourselves that we have deduced from the Euler Lagrange equations that $(\phi'_G\circ g)(X)=\pi_0(X,S_G)$ and therefore, $(\phi'_G\circ g)^{(l)}(X)=\pi_0^{(l)}(X,S_G)$ which is Lipschitz continuous in $X$ according to Lemma \ref{1LipschitzCk} with uniformly bounded Lipschitz constant for $S\in[\Sigma_0,\Sigma_1]$. To prove the second claim, we refer to \cite{Bangert.Dolzmann.2023} for the case $l=0$ and compute for $1\leq l\leq k$
    \begin{equation*}
        |(\phi_G'\circ\phi_G^{-1})^{(l)}(x_1)-(\phi_G'\circ\phi_G^{-1})^{(l)}(x_2)| =|(\phi_G'\circ g\circ y_G^{-1})^{(l)}(x_1)-(\phi_G'\circ g\circ y_G^{-1})^{(l)}(x_2)|\leq C|x_1-x_2|.
    \end{equation*}
    By the Lipschitz continuity of the map $x\mapsto y_G^{-1}$, which follows from the first assertion and the fact that $y_G'$ is bounded from below by a constant strictly larger than $0$ as shown in \cite{Bangert.Dolzmann.2023},
    the last inequality follows.
\end{proof}

To prove the following lemma, it is sufficient to assume that $D_0,\beta_0\in W^{k+1,\infty}([0,L_0])$. However, in view of proving higher time regularity of the solutions of the elliptic reaction-diffusion equation, we already make the stronger assumption that $D_0,\beta_0\in W^{k+m,\infty}([0,L_0])$.

\begin{lemma}\label{regularitycoefficients1d}
    Suppose that $W$ satisfies (W1)-(W4), that $D_0,\beta_0\in W^{k+m,\infty}([0,L_0])$ and that $B_1$ satisfies the assumptions in Definition \ref{defB1}. Then, the maps
    \begin{equation*}
        D:B_1\to C^k([0,l_0]),\quad G\mapsto D_G=(\phi_G'\circ\phi_G^{-1})(D_0\circ y_G^{-1})
    \end{equation*}
    and
    \begin{equation*}
        \beta:B_1\to C^k([0,l_0]),\quad G\mapsto \beta_G=(\phi_G'\circ\phi_G^{-1})^{-1}(\beta_0\circ y_G^{-1})
    \end{equation*}
    are bounded and Lipschitz continuous. In particular, there exist constants $L_{D,G}$ and $L_{\beta,G}$ depending only on the global constants such that
    \begin{equation*}
        \norm{D_1-D_2}_{C^k{[0,l_0]}}\leq L_{D,G}\norm{G_1-G_2}_{C^k{[0,L_0]}}
    \end{equation*}
    and
    \begin{equation*}
        \norm{\beta_1-\beta_2}_{C^k{[0,l_0]}}\leq L_{\beta,G}\norm{G_1-G_2}_{C^k{[0,L_0]}}.
    \end{equation*}
    Moreover, the coefficients $D_G,\beta_G$ are uniformly elliptic and $D_G$ and $\beta_G$ are uniformly bounded in $W^{k+1,\infty}$ for all $G\in B_1$ with Lipschitz constants independent of $G$.
\end{lemma}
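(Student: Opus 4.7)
The plan is to reduce all $C^k$ and $W^{k+1,\infty}$ estimates for $D_G = (\phi_G' \circ \phi_G^{-1})(D_0 \circ y_G^{-1})$ and $\beta_G = (\phi_G' \circ \phi_G^{-1})^{-1}(\beta_0 \circ y_G^{-1})$ to properties of two building blocks: the factor $\phi_G' \circ \phi_G^{-1}$ and the compositions $D_0 \circ y_G^{-1}$, $\beta_0 \circ y_G^{-1}$. The key identity $\phi_G' \circ \phi_G^{-1} = (\phi_G' \circ g) \circ y_G^{-1}$ allows me to treat every factor as a composition with $y_G^{-1}$, for which I exploit that, by Lemma \ref{boundednessofminimiserinCkplus1} and Definition \ref{defB1}, $y_G' = (\phi_G' \circ g) \cdot G \geq P_0 \Gamma_0 > 0$ uniformly in $G \in B_1$, so $y_G^{-1}$ is a $C^{k+1}$-diffeomorphism with derivatives uniformly controlled by the global constants.

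For the pointwise bounds and uniform ellipticity, I would combine $\phi_G' \circ \phi_G^{-1} \in [P_0, P_1]$ from Lemma \ref{boundednessofminimiserinCkplus1} with $D_0 \in [D_{min}, D_{max}]$ and $\beta_0 \in [\beta_{min}, \beta_{max}]$ to obtain $D_G \in [P_0 D_{min}, P_1 D_{max}]$ and $\beta_G \in [\beta_{min}/P_1, \beta_{max}/P_0]$. The $C^k$-boundedness then follows by applying Faà di Bruno to the compositions $F \circ y_G^{-1}$ with $F = \phi_G' \circ g$ and $F = D_0, \beta_0$, using the uniform $C^k$-bound on $\phi_G' \circ g$ from Lemma \ref{boundednessofminimiserinCkplus1}, the hypothesis $D_0, \beta_0 \in W^{k+m,\infty}([0,L_0])$, and the uniform $C^{k+1}$-bound on $y_G^{-1}$. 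The $W^{k+1,\infty}$-bound, i.e.\ Lipschitz continuity of the $k$-th spatial derivative, is supplied by Lemma \ref{1lipschitzof3functions}, which ensures that $(\phi_G' \circ \phi_G^{-1})^{(l)}$ and $y_G^{(l)}$ are uniformly Lipschitz on $B_1$ for $0 \leq l \leq k$.

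For Lipschitz dependence on $G$, I would use the product-rule splitting
\[
D_1 - D_2 = \bigl((\phi_1' \circ \phi_1^{-1}) - (\phi_2' \circ \phi_2^{-1})\bigr)(D_0 \circ y_1^{-1}) + (\phi_2' \circ \phi_2^{-1})(D_0 \circ y_1^{-1} - D_0 \circ y_2^{-1}),
\]
together with the fact that $C^k([0,l_0])$ is a Banach algebra, so it suffices to show that $G \mapsto \phi_G' \circ \phi_G^{-1}$ and $G \mapsto D_0 \circ y_G^{-1}$ are Lipschitz from $(B_1, \|\cdot\|_{C^k})$ into $C^k([0,l_0])$. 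Combined with Lemma \ref{1LipschitzGtoyG} part 2 and the hypothesis $D_0 \in W^{k+m,\infty}$, both reduce via a composition estimate to the statement that $G \mapsto y_G^{-1}$ is Lipschitz in $C^k$. The case of $\beta_G$ is analogous: since $u \mapsto 1/u$ is smooth on $[P_0, P_1]$, composition with it preserves Lipschitz continuity in $C^k$, so the reciprocal factor causes no additional difficulty.

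The technical heart of the proof, and the step I expect to be the main obstacle, is therefore Lipschitz continuity of $G \mapsto y_G^{-1}$ in $C^k$. I would establish this by inductively differentiating the identity $y_G \circ y_G^{-1} = \mathrm{id}$ to express each derivative $(y_G^{-1})^{(j)}$ as a rational function in the derivatives of $y_G$ evaluated at $y_G^{-1}$, with denominator a positive power of $y_G' \circ y_G^{-1}$. Using the uniform lower bound on $y_G'$, the uniform $C^{k+1}$-bound on $y_G^{-1}$, and the $C^{k+1}$-Lipschitz estimate $\|y_1 - y_2\|_{C^{k+1}} \lesssim \|G_1 - G_2\|_{C^k}$ — which itself follows by applying Lemma \ref{1LipschitzGtoyG} to $y_G = \phi_G \circ g$ and differentiating via $y_G' = (\phi_G' \circ g) \cdot G$ with part 2 of the same lemma — a standard Faà di Bruno computation then yields $\|y_1^{-1} - y_2^{-1}\|_{C^k} \lesssim \|G_1 - G_2\|_{C^k}$. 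Once this is in place, the remainder of the proof is multilinear bookkeeping.
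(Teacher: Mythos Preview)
Your proposal is correct and follows essentially the same route as the paper: the key identity $\phi_G'\circ\phi_G^{-1}=(\phi_G'\circ g)\circ y_G^{-1}$, a product-rule splitting, reduction to Lipschitz dependence of $G\mapsto y_G^{-1}$ in $C^k$, and handling of the reciprocal via the uniform lower bound $P_0$. Your treatment is in fact more explicit than the paper's on the $y_G^{-1}$ step (the paper simply asserts this, pointing to the proof of Lemma~\ref{1lipschitzof3functions}), and your use of the Banach-algebra structure of $C^k$ and Fa\`a di Bruno packages the bookkeeping more cleanly than the paper's derivative-by-derivative estimates.
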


\begin{proof}
    The uniform ellipticity follows as in Lemma 10 in \cite{Bangert.Dolzmann.2023}. We first show the Lipschitz continuity of the map $G\mapsto \phi'_G\circ \phi^{-1}_G$. By Lemma \ref{boundednessofminimiserinCkplus1} $\phi_G'$ is bounded in the $C^k$-norm on $[g(0,g(L_0)]$. We obtain that for all $x\in [0,l_0]$ and $0\leq l \leq k$
    \begin{align*}
        &|(\phi_1'\circ\phi_1^{-1})^{(l)}(x)-(\phi_2'\circ\phi_2^{-1})^{(l)}(x)|\\ \leq &|((\phi_1'\circ g_1)(y_1^{-1}(x))^{(l)}-((\phi_1'\circ g_1)(y_2^{-1}(x))^{(l)}|+|((\phi_1'\circ g_1)(y_2^{-1}(x))^{(l)}-((\phi_2'\circ y_2)(y_2^{-1}(x))^{(l)}|.
    \end{align*}
    As shown in the proof of Lemma \ref{1lipschitzof3functions}, $y_G^{-1}$ is bounded by a constant which is independent of $G$ and Lipschitz continuous with respect to the $C^{k}$-norm on $[0,l_0]$ with a Lipschitz constant which is also independent of $G$. By the rule for computing the derivative of the inverse function and the quotient rule this is also true for products of derivatives of order up to $k$ counting by multiplicity and hence, we can estimate the first term for all $x \in [0,l_0]$ by
    \begin{equation*}
        |((\phi_1'\circ g_1)(y_1^{-1}(x))^{(l)}-((\phi_1'\circ g_1)(y_2^{-1}(x))^{(l)}| \leq  C\norm{G_1-G_2}_{C^l([0,L_0])}
    \end{equation*}
    where the constant $C$ does not depend on $x$. According to Lemma \ref{1lipschitzof3functions}, for all $x \in [0,l_0]$ the second term can be estimated by
    \begin{equation*}
        |((\phi_1'\circ g_1)(y_2^{-1}(x))^{(l)}-((\phi_2'\circ y_2)(y_2^{-1}(x))^{(l)}|\leq C\norm{\phi_1'\circ g_1-\phi_2'\circ g_2}_{C^l([0,l_0])} \leq C\norm{G_1-G_2}_{C^l([0,L_0])}.
    \end{equation*}
    Also here, the constant $C$ does not depend on $x$. Therefore, the Lipschitz continuity of the map $G\mapsto \phi'_G\circ \phi^{-1}_G$ defined on $B_1$ with a Lipschitz constant only depending on the global constants in $C^k([0,l_0])$ follows by taking the supremum and summing up over $0\leq l\leq k$. Moreover, the map $G\mapsto D_0\circ y_G^{-1}$ can by the same argument also be estimated by
    \begin{equation*}
        \norm{D_0\circ y_1^{-1}-D_0\circ y_2^{-1}}_{C^k([0,l_0])}\leq C\norm{G_1-G_2}_{C^k([0,L_0])}
    \end{equation*}
    for some suitable constant $C$ only dependent on the global constants. The Lipschitz continuity of the map $G\mapsto D_G$ with a Lipschitz constant dependent only on the global constants follows as this map is given by the product of two bounded and Lipschitz continuous maps with a Lipschitz constant dependent only on the global constant as shown above. To prove Lipschitz continuity of the map $G\mapsto \beta_G$, note that, as in \cite{Bangert.Dolzmann.2023},
    \begin{equation*}
        \frac{1}{\phi_1'\circ \phi_1^{-1}}-\frac{1}{\phi_2'\circ \phi_2^{-1}}=\frac{\phi_2'\circ \phi_2^{-1}-\phi_1'\circ \phi_1^{-1}}{(\phi_1'\circ \phi_1^{-1})(\phi_2'\circ \phi_2^{-1})}.
    \end{equation*}
    Since the denominator is bounded from below by $P_0^2$, the map $(\phi_G'\circ\phi_G^{-1})^{-1}:B_1\to C^0([0,l_0])$, $G\mapsto (\phi_G'\circ\phi_G^{-1})^{-1}$ is Lipschitz continuous. Also here, the Lipschitz constant depends only on the global constants. The Lipschitz continuity of the map $G\mapsto \beta_G$ then follows analogously to the Lipschitz continuity of the map $G\mapsto D_G$. The Lipschitz continuity of the maps $x\mapsto D_G(x)$ and $x\mapsto \beta_G(x)$ with Lipschitz constants independent of $G$ is an immediate consequence of Lemma 9 in \cite{Bangert.Dolzmann.2023} taking into consideration that $D_0,\beta_0\in W^{k+1,\infty}([0,L_0])$.
\end{proof}

\begin{lemma}\label{1regularityoncoefficients}
    Suppose that $D_0,\beta_0\in W^{k+m,\infty}([0,L_0])$, that $W$ satisfies (W1)-(W4) and that $B_1$ satisfies the assumptions in Definition \ref{defB1}. Let $G\in B_1$. Then, there exists a unique solution $n_G\in H^1(0,l_0)$ of the reaction-diffusion equation induced by $G\in B_1$ given by
    \begin{equation*}
        \begin{cases}
        -(D_Gn'_G)'+\beta_Gn_G=0\quad\text{in}\,\, [0,l_0]\\n_G(0)=n_L\\n_G(l_0)=n_R.
        \end{cases}
    \end{equation*}
    Moreover, there exists a constant $M_{n,H^{k+2}}>0$ such that $n_G$ satisfies the a priori bound
    \begin{equation*}
        \norm{n_G}_{H^{k+2}(0,l_0)}\leq M_{n,H^{k+2}}
    \end{equation*}
    and, if $n_L,n_R\geq 0$, $n_G\geq 0$ on $[0,l_0]$. 
\end{lemma}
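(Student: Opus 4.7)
The plan is to treat the three assertions separately: existence and uniqueness in $H^1$ via Lax--Milgram, the a priori $H^{k+2}$-bound by an elliptic bootstrap, and the sign property by a weak maximum principle. Throughout, Lemma \ref{regularitycoefficients1d} supplies the uniform coefficient bounds that keep all constants independent of $G \in B_1$.

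First, I would reduce to homogeneous Dirichlet data by writing $n_G = u + \ell$ with $\ell(x) = n_L + (n_R - n_L)x/l_0$ and seeking $u \in H^1_0(0,l_0)$ with
\begin{equation*}
a(u, v) := \int_0^{l_0} \bigl(D_G u' v' + \beta_G u v\bigr)\,dx = -\int_0^{l_0} \bigl(D_G \ell' v' + \beta_G \ell v\bigr)\,dx
\end{equation*}
for every $v \in H^1_0(0,l_0)$. By Lemma \ref{regularitycoefficients1d}, $D_G$ and $\beta_G$ are uniformly bounded in $L^\infty$ and bounded below by strictly positive constants depending only on the global constants, so $a$ is bounded and coercive on $H^1_0$, and the right-hand side defines a bounded linear functional with uniformly controlled norm. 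Lax--Milgram then produces a unique $u$ with a uniform $H^1$-estimate, hence a unique $n_G \in H^1(0,l_0)$ satisfying the prescribed boundary conditions.

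To upgrade the regularity I would rewrite the equation pointwise as
\begin{equation*}
n_G'' = \frac{\beta_G\, n_G - D_G'\, n_G'}{D_G}.
\end{equation*}
Thanks to Lemma \ref{regularitycoefficients1d}, $D_G, \beta_G \in W^{k+1,\infty}$ with uniform norms and $D_G$ is bounded below uniformly, so $1/D_G \in W^{k+1,\infty}$ and $D_G' \in W^{k,\infty}$ also with uniform bounds. Standard Sobolev multiplication in one spatial dimension gives the implication: if $n_G \in H^j$ for some $1 \leq j \leq k+1$, then the right-hand side lies in $H^{j-1}$, so $n_G \in H^{j+1}$ with a quantitative estimate depending only on the previous bound and the global constants. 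Starting from the $H^1$-bound above and iterating $k+1$ times yields $n_G \in H^{k+2}(0,l_0)$ together with the asserted bound $M_{n,H^{k+2}}$.

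Finally, for non-negativity, assume $n_L, n_R \geq 0$ and test the weak formulation for $n_G$ with $v = \min(n_G, 0)$. Since $n_G(0), n_G(l_0) \geq 0$, we have $v \in H^1_0(0,l_0)$ with $v' = n_G'$ a.e.\ on $\{n_G < 0\}$ and $v' = 0$ otherwise, so that $n_G' v' = (v')^2$ and $n_G v = v^2$ almost everywhere. The weak equation reduces to
\begin{equation*}
\int_0^{l_0}\bigl(D_G |v'|^2 + \beta_G v^2\bigr)\,dx = 0,
\end{equation*}
and uniform ellipticity forces $v \equiv 0$, i.e., $n_G \geq 0$. The principal obstacle I anticipate is not conceptual but bookkeeping in the bootstrap: one has to verify that each iteration introduces only factors controlled by $\|D_G\|_{W^{k+1,\infty}}$, $\|\beta_G\|_{W^{k+1,\infty}}$ and powers of the uniform lower bound on $D_G$, all of which are themselves controlled by the global constants by Lemma \ref{regularitycoefficients1d}.
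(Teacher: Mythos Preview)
Your proof is correct and, in substance, coincides with the paper's: existence and uniqueness via Lax--Milgram (the paper delegates this and the non-negativity to Lemma~10 in \cite{Bangert.Dolzmann.2023}), the $H^{k+2}$-bound via elliptic regularity using the $W^{k+1,\infty}$-control on $D_G,\beta_G$ from Lemma~\ref{regularitycoefficients1d}, and the sign via the weak maximum principle. The paper's own proof is a two-line reference to these ingredients; you have simply written out what ``elliptic regularity theory'' means concretely in one dimension by isolating $n_G''$ and bootstrapping, which is the natural explicit realisation of that step.
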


\begin{proof}
    The existence of a unique solution and the fact that $n_G\geq 0$ on $[0,l_0]$ if $n_L,n_R\geq 0$ are already proven in Lemma 10 in \cite{Bangert.Dolzmann.2023}. The additional regularity and the uniform bound in the $H^{k+2}$-norm follow from elliptic regularity theory and Lemma \ref{regularitycoefficients1d}. Note, that in particular, due to Lemma \ref{regularitycoefficients1d}, $D_G,\beta_G\in W^{k+1,\infty}([0,L_0])$.
\end{proof}

We are now able to prove Lemma \ref{1Lipschitzdependencenutrients}.

\begin{proof}
    The uniform boundedness follows immediately from Lemma \ref{1regularityoncoefficients}. Suppose that $G_1,G_2\in B_1$ and that $n_1,n_2\in W^{1,2}([0,l_0])$ are the unique weak solutions of the reaction-diffusion equation with diffusion coefficients $D_1, D_2$ and absorption rates $\beta_1,\beta_2$ respectively. By Lemma \ref{1regularityoncoefficients} $n_1,n_2\in H^{k+2}(0,l_0)$ with uniform bounds on the $H^{k+2}$-norm independent of $G\in B_1$. 

    We first verify Lipschitz continuity of the map $n:C^k([0,L_0])\to H^{k+2}(0,l_0)$, $G\mapsto n(G)$. Due to the Sobolev embedding theorem, $n_1-n_2\in C^{k+1}([0,l_0])$. Thus, we can find a polynomial of order $2(k+1)+1$ which we denote by $P_{n_1-n_2}^{k+1}$ such that all derivatives of the map $x\mapsto n_1(x)-n_2(x)-P_{n_1-n_2}^{k+1}(x)$ defined on $[0,l_0]$ up to order $k+1$ vanish in the points $0$ and $l_0$. The construction of such a polynomial is analogous to the construction of the Taylor polynomial. Note, that we have $2(k+1)+2$ unknowns, therefore, a suitable polynomial must have degree $2(k+1)+1$. Proceeding inductively, we assume the statement of the Lemma we are currently proving holds for $l-1$ for $1\leq l\leq k+1$. The case $l=1$ was already proven in \cite{Bangert.Dolzmann.2023}. We differentiate the reaction-diffusion equation $l$-times and test with $n_1^{(l)}-n_2^{(l)}-(P_{n_1-n_2}^{k+1})^{(l)}$ and obtain for $i\in\{1,2\}$ after partial integration that
    \begin{equation*}
        \int_0^{l_0} (D_i n_i')^{(l)}(n_1^{(l+1)}-n_2^{(l+1)}-(P_{n_1-n_2}^{k+1})^{(l+1)})+(\beta_i n_i)^{(l)}(n_1^{(l)}-n_2^{(l)}-(P_{n_1-n_2}^{k+1})^{(l)})dx=0.
    \end{equation*}
    The difference of the two equations is given by
    \begin{align*}
        &\int_0^{l_0}(D_1 n_1'-D_2 n_2')^{(l)}(n_1^{(l+1)}-n_2^{(l+1)}-(P_{n_1-n_2}^{k+1})^{(l+1)}) \\+&(\beta_1 n_1-\beta_2 n_2)^{(l)}(n_1^{(l)}-n_2^{(l)}-(P_{n_1-n_2}^{k+1})^{(l)})dx=0.
    \end{align*}
    For the first addend, we obtain
    \begin{align*}
        &\int_0^{l_0}(D_1 n_1'-D_2 n_2')^{(l)}(n_1^{(l+1)}-n_2^{(l+1)}-(P_{n_1-n_2}^{k+1})^{(l+1)})dx\\ =& \int_0^{l_0} (D_1 n_1^{(l+1)}-D_2 n_2^{(l +1)})(n_1^{(l+1)}-n_2^{(l+1)}-(P_{n_1-n_2}^{k+1})^{(l+1)})\\& +\sum_{\gamma=0}^{l-1} \binom{l}{\gamma} (D_1^{(l-\gamma)}n_1^{(\gamma+1)}-D_2^{(l-\gamma)}n_2^{(\gamma+1)})(n_1^{(l+1)}-n_2^{(l+1)}-(P_{n_1-n_2}^{k+1})^{(l+1)})dx\\ = &\int_0^{l_0} D_1(n_1^{(l + 1)}-n_2^{(l + 1)})^2-(D_1-D_2)n_2^{(l+1)}(n_1^{(l + 1)}-n_2^{(l + 1)})\\&+(D_1 n_1^{(l+1)}-D_2 n_2^{(l +1)})(P_{n_1-n_2}^{k+1})^{(l+1)}\\& +\sum_{\gamma=0}^{l-1} \binom{l}{\gamma} (D_1^{(l-\gamma)}n_1^{(\gamma+1)}-D_2^{(l-\gamma)}n_2^{(\gamma+1)})(n_1^{(l+1)}-n_2^{(l+1)}-(P_{n_1-n_2}^{k+1})^{(l+1)})dx.
    \end{align*}
    The second addend can be treated analogously and we obtain
    \begin{align*}
        &\int_0^{l_0}(\beta_1 n_1-\beta_2 n_2)^{(l)}(n_1^{(l)}-n_2^{(l)}-(P_{n_1-n_2}^{k+1})^{(l)})dx\\=&\int_0^{l_0} \beta_1(n_1^{(l)}-n_2^{(l)})^2-(\beta_1-\beta_2)n_2^{(l)}(n_1^{(l)}-n_2^{(l)})+(\beta_1 n_1^{(l)}-\beta_2 n_2^{(l)})(P_{n_1-n_2}^{k+1})^{(l)}\\& +\sum_{\gamma=0}^{l-1} \binom{l}{\gamma} (\beta_1^{(l-\gamma)}n_1^{(\gamma)}-\beta_2^{(l-\gamma)}n_2^{(\gamma)})(n_1^{(l)}-n_2^{(l)}-(P_{n_1-n_2}^{k+1})^{(l)})dx.
    \end{align*}
    From the construction of $P_{n_1-n_2}^{k+1}$ and Lemma \ref{1regularityoncoefficients} it is immediately clear that this polynomial and all its derivatives are bounded on $[0,l_0]$ by a constant independent of $G$ and hence, we obtain, proceeding as in Lemma 11 in \cite{Bangert.Dolzmann.2023}, for a constant $C>0$ depending only on the global constants such that
    \begin{equation*}
        \norm{n_1-n_2}_{H^{k+1}(0,l_0)}\leq C\norm{D_1-D_2}_{C^{k}(0,l_0)}\norm{n_2'}_{H^k(0,l_0)}+C\norm{\beta_1-\beta_2}_{C^{k}(0,l_0)}\norm{n_2}_{H^k(0,l_0)}.
    \end{equation*}
    By Lemma \ref{1regularityoncoefficients} the Lipschitz continuity of the map $G\mapsto n_G$ in $H^{k+1}(0,l_0)$ follows. By the Sobolev embedding we can conclude that $n_G$ is Lipschitz continuous with respect to the $C^{k}$-norm. The Lipschitz continuity of $N_G$ in $G$ with respect to the $C^{k}$-norm follows directly by the following computation. We remark that $\phi'_G$ as well as $g'=G$ and $n_G$ are uniformly bounded on $B_1$ with respect to the $C^k$-norm by a constant that just depends on the global constants on their respective domain and hence, the function $y_G$ is bounded with respect to the $C^k$-norm. Let $X\in [0,L_0]$ and let $1\leq l\leq k$. Note, that the case $l=0$ was already shown in Lemma 11 in \cite{Bangert.Dolzmann.2023}. Using lemmas \ref{1LipschitzGtoyG} and \ref{1Lipschitzdependencenutrients} and computing the derivative of order $l$ yields
    \begin{align*}
        &|N_1^{(l)}(X)-N_2^{(l)}(X)|=|(n_1\circ y_1)^{(l)}(X)-(n_2\circ y_2)^{(l)}(X)|\\ \leq &|(n_1\circ y_1)^{(l)}(X)-(n_1\circ y_2)^{(l)}(X)|+|(n_1\circ y_2)^{(l)}(X)-(n_2\circ y_2)^{(l)}(X)| \\ \leq &C(\norm{n_1'}_{C^{l}([0,l_0]}\norm{y_1-y_2}_{C^0([0,L_0])}+\norm{G_1-G_2}_{C^{l}([0,L_0])}+\norm{n_1-n_2}_{C^{l}([0,l_0])})\\ \leq& C\norm{G_1-G_2}_{C^{l}([0,L_0])}.
    \end{align*}
    We take the supremum in $X$. The assertion follows by the Sobolev embedding theorem and by summing up over all $l$ with $0\leq l\leq k$.     
\end{proof}

\section{Global Existence of Solutions with Higher Regularity in Space}\label{sectionglobalexistence1d}

In the next step, we prove existence of global in time solutions of the system of ordinary differential equations in the space $C^1([0,T];C^k([0,L_0]))$. The existence of global solutions requires the existence of certain comparison functions to construct global sub- and supersolutions which prevent, from a modeling perspective, arbitrarily fast growth or self-absorption. For $0\leq l\leq k$ we define
\begin{equation*}
    C^l_+([0,L_0]):=\{G\in C^l([0,L_0]): G(X)>0\,\,\text{for all } X\in[0,L_0]\} \qquad\textup{ and}
\end{equation*}
\begin{equation*}
    \mathcal{L}^{\infty}_+([0,L_0]):=\{G\in \mathcal{L}^{\infty}([0,L_0]): G(X)>0\,\,\text{for all } X\in[0,L_0]\}
\end{equation*}
We define the autonomous ordinary differential equation
\begin{equation*}
    G'(t)=\hat{\mathcal{G}}(G(t)),\quad G(0)=1
\end{equation*}
in $C^k([0,L_0])$ where $B_1$ satisfies the assumptions in Definition \ref{defB1} and $\hat{\mathcal{G}}:B_1\to C^k([0,L_0])$ is defined via
\begin{equation*}
    \hat{\mathcal{G}}(G(X)=\mathcal{G}(G(X),S(G),N(G)(X),X).
\end{equation*}
We assume that the constitutive function $\mathcal{G}$ satisfies the following conditions:
\\ 

(G1)
There exist locally Lipschitz continuous functions $\hat{\mathcal{G}}_0,\hat{\mathcal{G}}_1: \mathcal{L}^\infty_+([0,L_0])\to \mathcal{L}^\infty([0,L_0])$ such that $\hat{\mathcal{G}}_i(G)(X)=\mathcal{G}_i(G(X),X)$ for locally Lipschitz continuous functions $\mathcal{G}_i:(0,\infty)\times [0,L_0]\to\RR$ such that for $i\in\{1,2\}$ the ordinary differential equations
\begin{equation*}
    \dot{G}_i(t)=\hat{\mathcal{G}}_i(G_i(t)),\quad G_i(0)=1 
\end{equation*}
have global solutions $G_i\in C^1([0,L_0];\mathcal{L}^\infty([0,L_0]))$ with 
\begin{equation*}
    G_0(t,X)<G_1(t,X) \quad  \textup{ for all } X\in [0,L_0],
\end{equation*}
\begin{equation}\label{equation lower bound global existence chapter regularity 1d}
    G_{min}=\inf_{[0,L_0]}\inf_{[0,T]} G_0(t,X)>0
    \qquad \textup{ and } \qquad
    G_{max}=\sup_{[0,L_0]}\sup_{[0,T]} G_0(t,X)<\infty.
\end{equation}

(G2) The function $\mathcal{G}:\RR_{>0}\times\RR\times\RR_{\geq 0}\times [0,L_0]\to\RR$ has the property that its mixed derivatives up to order $k+m-1$ in its first and third, $m-1$ in its second and $k$ in its fourth variable exist and are continuous. Additionally, $\mathcal{G}$ is jointly continuous in its first three arguments for all $X\in [0,L_0]$ fixed, which follows directly if $m>1$. 
\\

Note that $\mathcal{G}$ is measurable for all $(G,S,N)\in\RR_{>0}\times\RR\times\RR_{\geq 0}$ fixed, which follows automatically since $k\geq 1$.
\\

(G3) For all $\Gamma_i, \Sigma_i, H_i\in\RR$, $i=1,2$, with $0<\Gamma_0<\Gamma_1$, $\Sigma_0<\Sigma_1$, $0\leq H_0<H_1$ and for all $X\in [0,L_0]$ the function
\begin{equation*}
    \mathcal{G}(\cdot,\cdot,\cdot,X):[\Gamma_0,\Gamma_1]\times [\Sigma_0,\Sigma_1]\times [H_0,H_1]\to\RR,\quad (G,S,N)\mapsto \mathcal{G}(G,S,N,X)
\end{equation*}
and its mixed partial derivatives up to order $k$ in the first, third and fourth variable variable are uniformly Lipschitz continuous and bounded with constants $L_\mathcal{G}$ and $M_{\mathcal{G}}$ and the constants are independent of $X$.
\\

(G4) For all $G>0$, $S\in \RR$, $N\geq 0$, $X\in[0,L_0]$
\begin{equation*}
    \mathcal{G}_0(G,X)<\mathcal{G}(G,S,N,X)<\mathcal{G}_1(G,X).
\end{equation*}

Note that by Lemma \ref{1regularityoncoefficients} the nutrient concentration $n_G$ is continuous and nonnegative and that the pointwise evaluation of $N_G$ is justified by the Sobolev embedding theorem. We are now able to prove global existence of solutions of the system.

\begin{proposition}\label{maintheoremsection1dtimespaceregularity}
    Suppose that $T>0$, that $W$ satisfies (W1)-(W4) and that $\hat{\mathcal{G}}$ satisfies (G1)-(G4). Then, there exist a unique solution of the growth problem in the sense of Definition \ref{Definition Chapter Regularity Definition of Solutions} for the case $m=1$.
\end{proposition}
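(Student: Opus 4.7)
\medskip

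The plan is to apply Picard--Lindel\"of's theorem in the Banach space $C^k([0,L_0])$ to the ODE $\dot G = \hat{\mathcal{G}}(G)$, $G(0)\equiv 1$, after first fixing the ball $B_1$ of Definition~\ref{defB1} around $G_0\equiv 1$ with radii $R_1,R_k$ that will be chosen below in accordance with the a priori bounds. The core estimate to be verified is that $\hat{\mathcal{G}}:B_1\to C^k([0,L_0])$ is Lipschitz continuous and bounded, with constants depending only on the global constants. Once this is done, Picard--Lindel\"of produces a unique local solution in $C^1([0,\tau];C^k([0,L_0]))$, and we then extend it to $[0,T]$ by an a priori argument.

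To check that $\hat{\mathcal{G}}$ is Lipschitz and bounded, I would write
\begin{equation*}
\hat{\mathcal{G}}(G_1)(X)-\hat{\mathcal{G}}(G_2)(X)
= \int_0^1 \tfrac{d}{ds}\,\mathcal{G}\bigl(sG_1(X)+(1-s)G_2(X),\,sS(G_1)+(1-s)S(G_2),\,sN(G_1)(X)+(1-s)N(G_2)(X),\,X\bigr)\,ds
\end{equation*}
and use (G3) together with Lemma~\ref{1lipschitzdependenceofthestressonthegrowthtensor} and Lemma~\ref{1Lipschitzdependencenutrients} to control each term by $\|G_1-G_2\|_{C^k([0,L_0])}$. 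For the $C^k$ norm of the image, I would differentiate $\mathcal{G}(G(X),S(G),N(G)(X),X)$ up to order $k$ in $X$ via the chain rule (Fa\`a di Bruno), noting that $S(G)\in\mathbb{R}$ is independent of $X$ and that the derivatives of $N(G)$ in $X$ up to order $k$ are uniformly bounded in $B_1$ by Lemma~\ref{1Lipschitzdependencenutrients}; assumption (G2) ensures the required mixed partial derivatives of $\mathcal{G}$ exist, and (G3) supplies uniform Lipschitz and boundedness constants. Composing and collecting these estimates shows that $\hat{\mathcal{G}}$ is Lipschitz and uniformly bounded on $B_1$ with constants determined by the global data, so Picard--Lindel\"of yields a unique local solution $G\in C^1([0,\tau];C^k([0,L_0]))$.

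For global existence, it suffices to show that on the maximal existence interval the solution cannot leave $B_1$. The pointwise bound is handled by the comparison functions from (G1): for every fixed $X\in[0,L_0]$, assumption (G4) gives
\begin{equation*}
\mathcal{G}_0(G(t,X),X)<\partial_t G(t,X)=\mathcal{G}(G(t,X),S(G(t)),N(G(t))(X),X)<\mathcal{G}_1(G(t,X),X),
\end{equation*}
so the standard scalar comparison theorem (applied pointwise in $X$, using the Lipschitz continuity of $\mathcal{G}_0,\mathcal{G}_1$ from (G1)) yields $G_0(t,X)\leq G(t,X)\leq G_1(t,X)$, and hence $G(t,X)\in[\Gamma_0,\Gamma_1]$ by (\ref{equation lower bound global existence chapter regularity 1d}) after choosing $R_1$ large enough. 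For the $C^k$ bound, differentiate the ODE $j$ times in $X$ for $1\le j\le k$: by the chain rule, $\partial_X^j \dot G(t,X)$ equals a polynomial expression in $\partial_X^i G(t,X)$, $i\le j$, and $\partial_X^i N(G(t))(X)$, $i\le j$, with coefficients that are mixed partial derivatives of $\mathcal{G}$. The highest-order term is linear in $\partial_X^j G$ and all other factors are bounded on $B_1$ uniformly, by (G3) and Lemma~\ref{1Lipschitzdependencenutrients}. An induction on $j$ combined with Gronwall's lemma then produces a bound $\|G(t)\|_{C^k([0,L_0])}\leq M$ for $t\in[0,T]$, where $M$ depends only on the global constants and $T$. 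Choosing $R_k$ larger than $M+1$ (together with $R_1$ as above) guarantees that the local solution stays in the interior of $B_1$, so a standard continuation argument extends it to all of $[0,T]$. Uniqueness is immediate from the Lipschitz continuity of $\hat{\mathcal{G}}$.

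The main obstacle is the circularity between the choice of $B_1$ and the a priori bounds: the Lipschitz estimates of Sections~2--3 are formulated only on $B_1$, while the a priori argument is needed to show the solution remains in $B_1$. I would resolve this by first fixing $R_1$ from the pointwise comparison (which depends only on $G_{min},G_{max}$) and then running the Gronwall argument with the Lipschitz/boundedness constants of $\hat{\mathcal{G}}$ on a ball of any radius $R_k'$; the resulting $C^k$ bound $M$ grows at most exponentially in $T$ and does not depend on $R_k'$, so setting $R_k:=M+1$ closes the loop. A secondary technical point is the careful bookkeeping of the Fa\`a di Bruno expansion when differentiating $\hat{\mathcal{G}}(G)$ up to order $k$ in $X$, but no regularity beyond what (G2)--(G3) and Lemmas~\ref{1lipschitzdependenceofthestressonthegrowthtensor}--\ref{1Lipschitzdependencenutrients} provide is needed for the case $m=1$.
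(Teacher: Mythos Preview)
Your approach is essentially the same as the paper's: Picard--Lindel\"of in $C^k([0,L_0])$ for local existence, pointwise comparison via (G1)/(G4) for the $C^0$ a priori bound, and differentiation of the ODE in $X$ combined with Gronwall and induction on the derivative order for the $C^k$ a priori bound, followed by continuation.

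The one place where your sketch is less precise than the paper is the resolution of the circularity. You assert that the Gronwall bound $M$ ``does not depend on $R_k'$'', but your justification for the inhomogeneous terms invokes Lemma~\ref{1Lipschitzdependencenutrients}, whose constants are stated for $B_1$ and hence depend on $R_k$. The paper closes this loop by observing at each level $l$ that the forcing terms in the Gronwall inequality, in particular $\partial_X^l N(G)$, are controlled already by the $C^{l-1}$ norm of $G$ (through $y_G$, $D_G$, $\beta_G$, $n_G$), i.e.\ they are bounded on $B_1^{l-1}$ rather than on $B_1^l$; this is what makes the inductive definition of $R_1,\dots,R_k$ non-circular. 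Your induction on $j$ is the right framework, but you should make explicit that at step $j$ the relevant bounds on $N(G)$ and on the coefficients of $\mathcal G$ depend only on the already-established $C^{j-1}$ bound, not on $R_k'$.
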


\begin{proof}
    We show the existence of local solutions of the growth problem in the sense of Definition \ref{Definition Chapter Regularity Definition of Solutions} on $[0,T]$ for $m=1$, that is, in the space $C^1([0,T];C^k([0,L_0]))$. First, we will show local existence with the local existence theorem of Picard and Lindel\"of. Then, we will show the existence of global solutions by taking into account that, according to assumption (G1), sub- and supersolutions exist. The existence of global sub- and supersolutions implies that the solution is contained in a closed ball $B_0\subset C^k([0,L_0])$ for $t\in [0,T]$. In view of the lower bound in \ref{equation lower bound global existence chapter regularity 1d} there exists a closed ball $B_1\subset C^k_+([0,L_0])$ with the same center but a greater radius than $B_0$. Because all Lipschitz constants are uniformly bounded on $B_1$ we obtain a uniform bound on the local time of existence due to Picard-Lindel\"of's theorem where we use $G_0\in B_0$ as the initial condition.
    
    We first apply Picard-Lindel\"of's theorem to obtain local existence of solutions of the system. By (G1), the solution of the ordinary differential equations $G_i=\hat{\mathcal{G}}_i(G_i)$, $G_i(0)=1$ with $i\in\{1,2\}$ exists in the Banach space $C^1([0,T];\mathcal{L}^{\infty}([0,L_0]))$ and satisfies
    \begin{equation*}
    G_{min}=\inf_{[0,L_0]}\inf_{[0,T]} G_0(t,X)>0
    \qquad \textup{ and } \qquad
    G_{max}=\sup_{[0,L_0]}\sup_{[0,T]} G_0(t,X)<\infty.
\end{equation*}
For $R_k>0$ yet to be chosen we define the closed balls
\begin{equation*}
    B^0_0=\overline{B}_{C^0}\biggl(\frac{G_{min}+G_{max}}{2}, \frac{G_{max}-G_{min}}{2}\biggr),
\qquad
    B_0^k=\overline{B}_{C^k}\biggl(\frac{G_{min}+G_{max}}{2}, R_k \biggr) \qquad \textup{ and }
\end{equation*}    
\begin{equation*}
    B^0_1=\overline{B}_{C^0}\biggl(\frac{G_{min}+G_{max}}{2}, \frac{G_{max}}{2}\biggr).
\end{equation*}
Clearly, $B_0^0\subset B_1^0\subset C^0_+([0,L_0])$. Moreover, for every $G\in B^0_0$, $\overline{B}_{C^0}(G, G_{min}/2)\subset B^0_1$ and $B_1=B_0^k\cap B_1^0$ satisfies the assumptions in Definition \ref{defB1}. Assumption (G2) ensures that $\mathcal{G}$ is a Caratheodory function. For $G\in B_1$ the function $N(G)\in C^k([0,L_0])$ is continuous and measurable and therefore, the function $X\mapsto \mathcal{G}(G(X),S(G),N(G)(X),X)$ is measurable. 

In the next step, we verify the assumptions of Picard-Lindel\"of's theorem. Let $t_0=0$ and $G_0\in B_0^0$. By Lemma \ref{boundednessofminimiserinCkplus1} applied to $B_1$ there exist $P_0,P_1,P_2,\Sigma_0,\Sigma_1\in\RR$ with $P_0>0$ such that for all $G\in B_1$ the associated stress $S(G)\in[\Sigma_1,\Sigma_2]$ is uniformly bounded and the deformation gradient $\phi'_G$ is uniformly bounded from below by $P_0$ and $\norm{\phi'_G}_{C^k([g(0),g(L_0)])}$ is uniformly bounded by $P_2$. By Lemma \ref{1lipschitzdependenceofthestressonthegrowthtensor} the map $S:B_1\to\RR$, $G\mapsto S(G)$ is Lipschitz continuous and bounded with constants $L_{S,G}$ and $M_{S,G}$. By Lemma \ref{1Lipschitzdependencenutrients} the map $N:B_1\to C^k([0,L_0])$, $N\mapsto N_G$ is Lipschitz continuous and bounded with constants $L_{N,G}$ and $M_{N,G}$. Moreover, by Lemma \ref{1Lipschitzdependencenutrients} we can choose $H_0=0$ and a constant $H_1$ which only depends on the data of the problem with $N_G\geq H_0$ and $\norm{N_G}_{C^k([0,L_0])}\leq H_1$. If we choose $\Gamma_0=G_{min}/2$ and $\Gamma_1= G_{max}+G_{min}/2$, $B_1$ satisfies the assumptions in Definition \ref{defB1}. With these choices of $\Gamma_1,\Gamma_2,\Sigma_0,\Sigma_1, H_0$ and $H_1$ assumption (G3) guarantees that $\mathcal{G}$ is uniformly Lipschitz continuous in the first three arguments and bounded on $[\Gamma_0,\Gamma_1]\times [\Sigma_0,\Sigma_1]\times [H_0,H_1]$ independently of $X$. The right-hand side of the ordinary differential equation is given by
\begin{equation*}
    \hat{\mathcal{G}}(G)(X)=\mathcal{G}(G(X),S(G),N(G)(X),X).
\end{equation*}
Moreover, taking into account that the product of bounded Lipschitz continuous functions is again Lipschitz continuous and (G3), $\hat{\mathcal{G}}$ satisfies for all $G_1,G_2\in B_1$
\begin{equation*}
    \norm{\hat{\mathcal{G}}(G_1)-\hat{\mathcal{G}}(G_2)}_{C^k{[0,L_0]}} \leq L_{\mathcal{G}}\norm{G_1-G_2}_{C^k([0,L_0])}.
\end{equation*}
This follows immediately from computing the derivatives of order up to $k$ with the chain rule and the product rule and Lemmas \ref{1lipschitzdependenceofthestressonthegrowthtensor} and \ref{1Lipschitzdependencenutrients}. Similarly, we can easily see that for all $G \in B_1$
\begin{equation*}
    \norm{\hat{\mathcal{G}}(G)}_{C^k([0,L_0])}=\sum_{l=0}^k \sup_{X\in[0,L_0]}|\partial_X^{(l)}(\mathcal{G}(G(X),S(G),N(G)(X),X))|\leq M_{\mathcal{G}}.
\end{equation*}
We obtain local existence of a solution $G\in C^1([0,\hat{T}^k_1], C^k([0,L_0]))$ by choosing $\hat{T}^k_1>0$ small enough for the inequalities $L_\mathcal{G}\hat{T}^k_1<1$ and $\hat{T}^k_1\leq \min\{\frac{1}{2M_0}G_{min}, \frac{R_k}{M_\mathcal{G}}\}$ to hold. Here, $M_\mathcal{G}$ denotes the uniform bound on $\mathcal{G}$ with respect to the $C^k$-norm on $B_1=B_0^k \cap B_1^0$ and $M_0$ denotes the uniform bound of $\mathcal{G}$ on $B_1^0$. According to Picard-Lindel\"of's theorem there exist a unique solution $G\in C^1([0,\hat{T}^k_1];C^k([0,L_0]))$ of the ordinary differential equation $G'=\hat{\mathcal{G}}(G)$ where $\hat{T}^k_1$ still depends on our choice of $R_k$. 

It remains to prove uniform a priori bounds. As in \cite{Bangert.Dolzmann.2023} we can show that $G(t)\in B_0^0$ for all $t\in [0,\hat{T}^0]$ with $\hat{T}^0$ chosen as in \cite{Bangert.Dolzmann.2023}, that is, $\hat{T}^0$ is the time-interval on which we obtain local existence of solutions of the ordinary differential equation on the space $C^1([0,T];C^0([0,T]))$ with Picard-Lindel\"of's theorem. It remains to show that $G(t)\in B_0^k$ for all $t\in [0,T]$. In the proof of Theorem 3 in\cite{Bangert.Dolzmann.2023} it was shown that we obtain a solution $G\in C^1([0,T];C^0([0,L_0])$ where $G\in B_0^0$ for all $t\in [0,T]$. Note, that at this point, it does not make a difference if we solve the ordinary differential equations for functions with values in $C^0([0,L_0])$ or $\mathcal{L}^{\infty}([0,L_0])$. The authors of \cite{Bangert.Dolzmann.2023} chose the latter approach. 

In the next step, we show global existence of a solution $G\in C^1([0,T]; C^1([0,L_0])$.
Because $G\in C^1([0,\hat{T}^k_1];C^k([0,L_0]))$ we can compute $\partial_X G(0,X)$. This will define $R_1$. We will show that $|\partial_X G(t,X)|$ is bounded, for all $X\in [0,L_0]$ and $t \in [0,\hat{T}^k_1]$, by the constant $R_1$ depending only on $\partial_X G(0,X)$ and the global constants which is yet to be determined. By differentiating the equation on the time interval $[0,\hat{T}^k_1]$ we obtain that for all $X\in [0,L_0]$
\begin{align*}
    \partial_t \partial_X G(t,X)=& \mathcal{G}_1(G(t,X),S(G)(t),N(G)(t,X),X)\partial_X G(t,X)\\&+ \mathcal{G}_3(G(t,X),S(G)(t),N(G)(t,X),X)\partial_X N(G)(t,X)\\&+\mathcal{G}_4(G(t,X),S(G)(t),N(G)(t,X),X)
\end{align*}
where $\mathcal{G}_i$ denotes the partial derivative with respect to the $i$-th variable. Now, we want to estimate $(\partial_X G(t,X))^2$ in order to estimate the absolute value of $\partial_X G(t,X)$. Because $G\in B_0^0$, Young's inequality asserts that for some $C>0$ depending only on the global constants
\begin{align*}
    \partial_t (\partial_X G(t,X))^2=& 2\partial_X G(t,X)\partial_t \partial_X G(t,X)\\=& 2(\mathcal{G}_1(G(t,X),S(G)(t),N(G)(t,X),X)\partial_X G(t,X)\\&+ \mathcal{G}_3(G(t,X),S(G)(t),N(G)(t,X),X)\partial_X N(G)(t,X)\\&+\mathcal{G}_4(G(t,X),S(G)(t),N(G)(t,X),X))\partial_X G(t,X)\\ \leq & C(\partial_X G(t,X))^2+C
\end{align*}
for all $t\in[0,\hat{T}^k_1]$. Here, we used that $G\mapsto N(G)=n_G\circ\phi_G\circ g$ and according to the chain rule and Lemmas \ref{boundednessofminimiserinCkplus1} and \ref{1regularityoncoefficients}, $\partial_X N(G)(t,X)$ is uniformly bounded on $B_1^0$. Hence, we can apply Gronwall's inequality and obtain that for all $t\in [0,\hat{T}^k_1]$
\begin{equation*}
    (\partial_X G(t,X))^2\leq e^{Ct}((\partial_X G(0,X))^2+Ct) \leq e^{CT}(\norm{\partial_X G(0,\cdot)}_{C^0([0,L_0])}^2+CT)=:R_1^2.
\end{equation*}
Therefore, $|\partial_X G(t,X)|$ is bounded by a constant depending only on the global constants and $\norm{\partial_X G(0,\cdot)}_{C^0([0,L_0])}$.

Now, we are able to prove global existence of solutions. Because $G(\hat{T}^k_1)\in B_0^1$, we may set $t_0=\hat{T}^k_1$, $\hat{T}^k_2=\min\{T,2\hat{T}^k_1\}$ and $G_0=G(\hat{T}^k_1)$ and argue as in the previous step to continue the solution and to obtain the existence of a solution in $C^1([0,\hat{T}^k_2],C^1([0,L_0]))$. As before, this solution satisfies $G(t)\in B_0^1$ for all $t \in [0,\hat{T}^k_2]$ and after finitely many steps we obtain a solution on $[0,T]$. Uniqueness of the solution follows from Picard-Lindel\"of's theorem.

Inductively, we show that for $k\geq 2$, we obtain a global solution $G\in C^1([0,T];C^k([0,L_0]))$. Assume, we have a global solution $G\in C^{1}([0,T];C^{l-1}([0,L_0]))$ for $2\leq l\leq k$. Because in particular $G\in C^1([0,\hat{T}^k_1];C^l([0,L_0]))$, we can compute $\partial_X^j G(0,X)$ for $1\leq j\leq l$. This will lead to the existence of a constant $R_l$ with $\norm{ G(t,\cdot)}_{C^l [0,L_0]}\leq R_l$ on $[0,T]$ where $R_l$ only depends on $\partial_X^j G(0,X)$ and the global constants. We have already seen by Picard-Lindel\"of's theorem, that we obtain a time horizon $\hat{T}^k_1$ for which a unique solution $G\in C^1([0,\hat{T}^k_1];C^l([0,L_0]))$ exists. We now show that $|\partial_X^l G(t,X)|$ is, for all $X \in [0,L_0]$, bounded by $R_l$ depending only on $\partial_X^j G(0,X)$ for $1\leq j\leq l$ and the global constants. In this step we determine an upper bound for the minimal value which $R_l$ is allowed to take. By differentiating the equation on the time interval $[0,\hat{T}^k_1]$ we obtain that for all $X\in [0,L_0]$
\begin{align*}
    \partial_t \partial_X^{l} G(t,X)=& \mathcal{G}_1(G(t,X),S(G)(t),N(G)(t,X),X)\partial_X^{l}G(t,X) \\&+ \sum_{j=2}^l\partial_X^{l-j}((\partial_X\mathcal{G}_1(G(t,X),S(G)(t),N(G)(t,X),X))\partial_X^{j-1}G(t,X))\\&+ \partial_X^{l-1}(\mathcal{G}_3(G(t,X),S(G)(t),N(G)(t,X),X)\partial_XN(G)(t,X))\\&+\partial_X^{l-1}\mathcal{G}_4(G(t,X),S(G)(t),N(G)(t,X),X).
\end{align*}

Now, we want to estimate $(\partial_X^l G(t,X))^2$ in order to estimate the absolute value of $\partial_X^l G(t,X)$. Because $G(t) \in B_0^{l-1}$ for all $t \in [0,\hat{T}^k_1]$, Young's inequality asserts that for some $C>0$ depending only on the global constants
\begin{equation*}
    \partial_t (\partial_X^l G(t,X))^2\leq  C(\partial_X^l G(t,X))^2+C
\end{equation*}
for all $t\in [0,\hat{T}^k_1]$. Here, we also used that $G\mapsto N(G)=n_G\circ\phi_G\circ g$ and according to (G3), the chain rule and Lemmas \ref{boundednessofminimiserinCkplus1} and \ref{1regularityoncoefficients}, $\partial_X^l N(G)$ is uniformly bounded on $B_1^{l-1}$. Therefore, we can apply Gronwall's inequality and obtain that for all $t\in [0,\hat{T}^k_1]$
\begin{equation*}
    (\partial_X^l G(t,X))^2  \leq e^{CT}((\partial_X^l G(0,X))^2+CT)=:R_l^2.
\end{equation*}
Hence, $|\partial_X^l G(t,X)|$ is bounded by a constant depending only on the global constants and $\partial_X^j G(0,X)$ on $[0,\hat{T}^k_1]$ with $1\leq j\leq l$.

Now, we are able to prove global existence of solutions. Because $G(\hat{T}^k_1)\in B_0^l$, we may, as before, set $t_0=\hat{T}^k_1$, $\hat{T}^k_2=\min\{T,2\hat{T}^k_1\}$ and $G_0=G(\hat{T}^k_1)$ and argue as in the previous step to continue the solution and to obtain the existence of a solution in $C^1([0,2\hat{T}^k_1],C^l([0,L_0]))$. As before, this solution satisfies $G(t)\in B_0^l$ for all $t$ and, after finitely many steps, we obtain a solution on $[0,T]$. Uniqueness of the solution follows from Picard-Lindel\"of's theorem. This proves the claim.
\end{proof}

\begin{example}
    As in \cite{Bangert.Dolzmann.2023} the following example satisfies the assumptions (G1)-(G4) and thus gives rise to a unique solution of the growth problem for the case $m=1$:
    \begin{equation*}
        \hat{\mathcal{G}}:\mathcal{C}^k_+([0,L_0])\mapsto\mathcal{C}^k([0,L_0]),\qquad \hat{\mathcal{G}}(G)(X)=\gamma(X)\mu(S(G))\eta(N(G)(X))G(X)
    \end{equation*}
    Here, $S(G)\in\RR$ and $N(G)\in H^{k+2}(0,L_0)$ denote the stress and the nutrient concentration induced by the growth via $G$. we assume that $\gamma\in C^k([0,L_0];[\gamma_0,\gamma_1])$, that $\mu\in W^{k+1,\infty}(\RR;[\mu_0,\mu_1])$ and that $\eta\in W^{k+1,\infty}((0,\infty);[\eta_0,\eta_1])$ with $\eta$ and $\nu$ increasing and $0<\gamma_0<\gamma_1$, $\mu_0<\,u_1$ and $0\leq\eta_0\leq\eta_1$. As in \cite{Bangert.Dolzmann.2023} we can find sub- and supersolutions that satisfy (G1) and (G4). It is easy to check that (G2) and (G3) are satisfied as well.
\end{example}

\section{Global Existence of Solutions  with Higher Regularity in Space and Time}

To prove additional time regularity of the solutions of the growth problem in the sense of Definition \ref{Definition Chapter Regularity Definition of Solutions} for the case with $m\geq 2$ arbitrary, cf. Theorem \ref{maintheoremsection1dtimespaceregularityhighertimeregularity}, it remains to show that for $1\leq l\leq m-1$ the map $t\mapsto S(t)$ is in the space $C^{l}([0,T];\mathbb{R})$ and that the map $t\mapsto N(\cdot,t)$ is in the space $C^{l}([0,T];C^k([0,L_0]))$ if $G\in C^{l}([0,T];C^k([0,L_0])$. In this section, we assume that $G\in C^1([0,T];C^k([0,L_0]))$ is a solution of the growth problem with $m=1$ in the sense of Definition \ref{Definition Chapter Regularity Definition of Solutions}. 

\begin{lemma}
    If $G\in C^{l}([0,T];C^k([0,L_0]))$ for $1\leq l\leq m-1$, then $g\in C^{l}([0,T];C^{k+1}([0,L_0]))$.
\end{lemma}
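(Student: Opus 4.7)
The plan is to exploit the fundamental integral representation that comes from the defining relation $\partial_X g(t,\cdot)=G(t,\cdot)$ together with the fact that the left endpoint of the reference configuration is pinned, so that
\begin{equation*}
g(t,X)=g(t,0)+\int_0^X G(t,Y)\,dY.
\end{equation*}
Under the normalization $g(t,0)=0$ (or more generally provided $t\mapsto g(t,0)$ inherits the same time regularity as $G$, which is immediate from the setup of the model), the entire regularity of $g$ is reduced to the regularity of the parameter integral of $G$.

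First I would establish the spatial regularity. For each fixed $t\in[0,T]$, the hypothesis $G(t,\cdot)\in C^k([0,L_0])$ combined with the fundamental theorem of calculus gives $g(t,\cdot)\in C^{k+1}([0,L_0])$ with $\partial_X^{j+1}g(t,\cdot)=\partial_X^{j}G(t,\cdot)$ for $0\le j\le k$, together with the elementary bound
\begin{equation*}
\|g(t,\cdot)\|_{C^{k+1}([0,L_0])}\le |g(t,0)|+(1+L_0)\,\|G(t,\cdot)\|_{C^k([0,L_0])}.
\end{equation*}

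Next I would pass time derivatives through the integral. Since $G\in C^l([0,T];C^k([0,L_0]))$, each derivative $\partial_t^j G$ for $0\le j\le l$ lies in $C^0([0,T];C^k([0,L_0]))$, and in particular $\partial_t^j G$ is jointly continuous on $[0,T]\times[0,L_0]$ with $\partial_t^j G(\cdot,Y)$ continuously differentiable in $t$ for $j\le l-1$. A standard application of differentiation under the integral sign, iterated $l$ times, yields
\begin{equation*}
\partial_t^{j}g(t,X)=\int_0^X \partial_t^{j}G(t,Y)\,dY,\qquad 0\le j\le l.
\end{equation*}
Combining this with the spatial argument above, each map $t\mapsto \partial_t^{j} g(t,\cdot)$ sends $[0,T]$ into $C^{k+1}([0,L_0])$, and continuity in $t$ with values in $C^{k+1}([0,L_0])$ follows from the uniform estimate
\begin{equation*}
\|\partial_t^{j} g(t,\cdot)-\partial_t^{j} g(s,\cdot)\|_{C^{k+1}([0,L_0])}\le (1+L_0)\,\|\partial_t^{j} G(t,\cdot)-\partial_t^{j} G(s,\cdot)\|_{C^k([0,L_0])}
\end{equation*}
together with the assumed continuity of $t\mapsto \partial_t^{j} G(t,\cdot)$ in $C^k([0,L_0])$. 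This proves $g\in C^l([0,T];C^{k+1}([0,L_0]))$.

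There is no genuine obstacle here: the statement is essentially the fact that integrating in the spatial variable $X$ gains one derivative in space while commuting with $C^l$-regularity in the time parameter, because integration against $dY$ on the fixed interval $[0,X]\subset[0,L_0]$ is a bounded linear operator from $C^k([0,L_0])$ into $C^{k+1}([0,L_0])$ and hence preserves $C^l$-regularity of curves in these Banach spaces. The only mildly delicate point is the justification of differentiation under the integral sign, which reduces to uniform continuity of $\partial_t^j G$ on the compact set $[0,T]\times[0,L_0]$ and is therefore routine.
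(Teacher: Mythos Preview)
Your proof is correct and follows essentially the same approach as the paper, which simply notes that $G(t,X)=\partial_X g(t,X)$ and invokes the fundamental theorem of calculus. Your version is a careful unpacking of that one-line argument, making explicit the integral representation, the differentiation under the integral sign, and the boundedness of spatial integration as an operator $C^k\to C^{k+1}$.
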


\begin{proof}
    By Definition $G(t,X)=\partial_X g(t,X)$ for all $X\in [0,L_0]$ and $t\in [0,T]$. The statement follows by the fundamental theorem of calculus.
\end{proof}

The following lemma states that the map $t\mapsto S(t)$ is sufficiently regular in time. The proof relies on the fact that by further differentiating the map $\pi_0$ as in the proof of Lemma \ref{1LipschitzCk} and taking into account assumption (W4), we attain as much regularity in the variable $S$ as $W_p$ allows us to. Then we can modify the proof of Lemma 6 in \cite{Bangert.Dolzmann.2023} to obtain higher regularity of the map $G\mapsto S(G)$. 

\begin{lemma}\label{additionaltimeregularitystresses1dsection1}
    If $G\in C^{l}([0,T];C^k([0,L_0]))$ for $1\leq l\leq m-1$, then $S\in C^{l}([0,T];\mathbb{R})$.  
\end{lemma}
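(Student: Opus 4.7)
The plan is to characterize $S(t) := S(G(t))$ as the solution of an implicit scalar equation coming from the Dirichlet boundary condition, and then to obtain higher time regularity by iterated differentiation of this equation, exploiting the strong regularity of $\partial_p W$ asserted in (W4).

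First I would derive the implicit equation. By the Euler-Lagrange equation of Proposition \ref{Proposition Chapter Regularity Existence of Minimisers}, $\phi'_G(g(X)) = \pi_0(X, S(G))$ for all $X \in [0, L_0]$. The Dirichlet condition $\phi_G(g(L_0)) - \phi_G(g(0)) = l_0$ becomes, after the change of variables $z = g(X)$ with $g' = G$,
\begin{equation*}
    F(G, S) := \int_0^{L_0} \pi_0(X, S) G(X) \, dX - l_0 = 0,
\end{equation*}
and hence $F(G(t), S(t)) = 0$ for every $t \in [0, T]$. Lemma \ref{1LipschitzCk} yields $\partial_S \pi_0 = (\partial_{pp} W)^{-1}$, and combining (W4) with the uniform bound $\pi_0(\cdot, S_G) \in [P_0, P_1]$ from Lemma \ref{boundednessofminimiserinCkplus1} together with $G \geq \Gamma_0$, the partial derivative
\begin{equation*}
    \partial_S F(G(t), S(t)) = \int_0^{L_0} \partial_S \pi_0(X, S(t)) G(t, X) \, dX
\end{equation*}
is bounded below by a positive constant depending only on the global constants.

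Next I would extend Lemma \ref{1LipschitzCk} to provide higher regularity of $\pi_0$ in the $S$-variable, following the remark preceding the statement. Repeatedly differentiating the identity $\partial_p W(X, \pi_0(X, S)) = S$ implicitly in $S$, each $\partial_S^j \pi_0$ becomes a rational expression whose numerator involves mixed derivatives of $\partial_p W$ of total order at most $j+1$ and whose denominator is a power of $\partial_{pp} W$. Since $\partial_p W \in C^{k+m}$ by (W4) and $\partial_{pp} W \geq c_0 > 0$ on the relevant compact set, $\partial_S^j \pi_0$ exists and is uniformly bounded for $0 \leq j \leq m$ and $S \in [\Sigma_0, \Sigma_1]$.

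The main step is then an induction on $j \in \{0, 1, \ldots, l\}$ with statement $S \in C^j([0, T]; \mathbb{R})$. The base case $j = 0$ is the continuity part of Lemma \ref{1lipschitzdependenceofthestressonthegrowthtensor}. For the inductive step I would differentiate $F(G(t), S(t)) = 0$ exactly $j$ times in $t$; a Fa\`a di Bruno-type expansion then expresses $S^{(j)}(t)$ as a quotient whose denominator is $\partial_S F(G(t), S(t))$ (bounded below by the second paragraph) and whose numerator is polynomial in the factors $\partial_S^i \pi_0(\cdot, S(t))$ for $0 \leq i \leq j$, $G^{(i)}(t, \cdot)$ for $0 \leq i \leq j$, and $S^{(i)}(t)$ for $0 \leq i \leq j-1$. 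By the inductive hypothesis, by the assumption $G \in C^l([0, T]; C^k([0, L_0]))$, and by the fact that $j \leq l \leq m-1 < m$ ensuring that all required $\partial_S^i \pi_0$ exist and are continuous, every factor depends continuously on $t$, giving $S^{(j)} \in C^0([0, T]; \mathbb{R})$. The hard part is really bookkeeping: ensuring the Fa\`a di Bruno expansion has all factors well-defined, which is precisely where the assumption $\partial_p W \in C^{k+m}$ and the constraint $l \leq m - 1$ enter.
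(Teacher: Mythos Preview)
Your proof is correct and rests on the same implicit scalar equation $F(G,S)=0$ (the paper calls it $\Phi$) and on the same higher $S$-regularity of $\pi_0$ extracted from (W4). The paper organises the argument slightly differently: it verifies that $\Phi \in C^{m-1}((\Sigma_0-1,\Sigma_1+1)\times B_1;\RR)$ and then applies the Banach-space implicit function theorem once to conclude that the map $G\mapsto S(G)$ is $C^{m-1}$ on $B_1$, after which $t\mapsto S(t)$ inherits $C^{l}$-regularity simply by composition with $t\mapsto G(t)\in C^{l}$. Your route bypasses the abstract implicit function theorem by differentiating $F(G(t),S(t))=0$ directly in $t$ and bootstrapping; this is more elementary and keeps everything one-dimensional, at the cost of the Fa\`a di Bruno bookkeeping you mention. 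One small refinement: in the inductive step you should, strictly speaking, differentiate only $j-1$ times (which is justified by the hypothesis $S\in C^{j-1}$), obtain an identity in which $S^{(j-1)}$ appears multiplied by $\partial_S F$, and then observe that all remaining terms are $C^{1}$ in $t$, forcing $S^{(j-1)}\in C^{1}$; this avoids the apparent circularity of assuming $S\in C^{j}$ in order to differentiate $j$ times.
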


\begin{proof}
    We can already deduce from Lemma \ref{1lipschitzdependenceofthestressonthegrowthtensor} that the map $t\mapsto S(t)$ is in the space $C^1([0,T];\mathbb{R})$ as a concatenation of the maps $t\mapsto G(t, \cdot)$, which is in the space $C^{l}([0,T];C^k([0,L_0]))$ by assumption, and $G(t, \cdot)\mapsto S(t)$, which is in the space $C^1(B_1,\mathbb{R})$ by Lemma \ref{1lipschitzdependenceofthestressonthegrowthtensor}. 
    By differentiating the right-hand side in equation \ref{section1additionaltimeregularitypi0} $(m-2)$-times and taking into account that $\partial_p W\in C^{k+m}([0,L_0]\times (0,\infty);\mathbb{R})$ and $\partial_{pp}W>0$ we immediately see that at most $m-2$ derivatives of $\partial_{pp} W$ appear in the numerator and the denominator is uniformly bounded from below. Hence, for all $S_0,S_1 \in \RR$ with $S_0 < S_1$, $\pi_0\in C^{m-1}([S_0,S_1];C^{k+1}([0,L_0]))$. Therefore, the map $\Phi$ as in the proof of Lemma 6 in \cite{Bangert.Dolzmann.2023} is in the space $C^{m-1}((\Sigma_0-1,\Sigma_1+1)\times B_1; \mathbb{R})$. As in Lemma 6 in \cite{Bangert.Dolzmann.2023}, we define $\Phi:\RR \times C^k([0,L_0])\to \RR$ via
    \begin{equation*}
        (S,G)\mapsto \Phi(S,G)= \int_0^{L_0} \pi_0(X,S)G(X)dX-l_0
    \end{equation*}
    and for each $G\in B_1$ there exist exactly one $S$ with $S=S(G)$ such that $\Phi(S(G),G)=0$. Moreover, for $\Delta G \in C^{k}([0,L_0])$
    \begin{equation*}
        \frac{\partial\Phi}{\partial G}(S,G)[\Delta G]=\int_0^{L_0}\pi_0(X,S)\Delta G(X)dX
    \end{equation*}
    and $S \in \RR$
    \begin{equation*}
        \frac{\partial\Phi}{\partial S}(S,G)[\Delta S]=\biggl(\int_0^{L_0}\frac{1}{W_{pp} (X,\pi_0(X,S))}G(X)dX\biggr)\Delta S.
    \end{equation*}
    We notice that $\frac{\partial\Phi}{\partial G}(S,G)$ does not depend on $G$ anymore and that the derivative of $\Phi$ with respect to $S$ is linear in $G$. Furthermore, we can compute $(\frac{\partial}{\partial S})^{m-1}\Phi$ and see that derivatives of order at most $m$ of $W_p$ and $m-2$ of $\pi_0$ in the variable $S$ appear. Exemplarily, we compute $(\frac{\partial}{\partial S})^{2}\partial\Phi$. 
    \begin{align*}
        &\biggl(\frac{\partial}{\partial S}\biggr)^{2}\Phi(S,G)[\Delta S]=\frac{\partial}{\partial S}\biggl(\int_0^{L_0}\frac{1}{W_{pp} (X,\pi_0(X,S))}G(X)dX\biggr)[\Delta S]\\=&\biggl(\int_0^{L_0} \frac{d}{d\varepsilon}\biggr|_{\varepsilon=0} \frac{1}{\partial_{pp} W(X,\pi_0(X,S+\varepsilon\Delta S))}G(X)dX\biggr)\Delta S\\=&\biggl(\int_0^{L_0}-\frac{\partial_{ppp}W(X,\pi_0(X,S))\partial_S \pi_0(X,S)\Delta S}{(\partial_{pp}W(X,\pi_0(X,S)))^2}G(X)dX\biggr) \Delta S.\\=&\biggl(\int_0^{L_0}-\frac{\partial_{ppp}W(X,\pi_0(X,S))\partial_S \pi_0(X,S)}{(\partial_{pp}W(X,\pi_0(X,S)))^2}G(X)dX\biggr) (\Delta S)^2.
    \end{align*}
    As we have seen before, $\pi_0\in C^{m-1}([S_0,S_1];C^{k+1}([0,L_0]))$ and therefore, all derivatives appearing in $(\frac{\partial}{\partial S})^{m-1}\Phi$ are contained in a compact set. Moreover, as we have seen in the proof of proposition \ref{maintheoremsection1dtimespaceregularity}, $G(t, \cdot)\in B_1$ for all $t\in [0,T]$. Therefore, $(\frac{\partial}{\partial S})^{m-1}\Phi$ is uniformly bounded on $[\Sigma_0-1, \Sigma_1+1]\times B_1$ by a constant depending only on the global constants. As in the proof of Lemma 6 in \cite{Bangert.Dolzmann.2023} we can apply the implicit function theorem. Note, that in particular, as in the proof of Lemma 6 in \cite{Bangert.Dolzmann.2023}, we obtain that $\frac{\partial\Phi}{\partial S}$ is bounded from below by a positive constant. Due to the higher regularity of the map $\Phi$, the map $G\mapsto S(G)$ is in the space $C^{m-1}(B_1,\mathbb{R})$. Because the map $t\mapsto G(t, \cdot)$ is in the space $C^{l}([0,T];C^k([0,L_0]))$ by assumption, the concatenation $t\mapsto S(t)$ is in the space $C^{l}([0,T];\mathbb{R})$.
\end{proof}

Because the nutrient concentration in the reference configuration $N(t,X)$ with $X\in [0,L_0]$ and $t\in [0,T]$ is defined via $N(t,X)=n(y(t,X),t)$, we have to show that the map $t\mapsto y(\cdot, t)$ lies in the space $C^{m-1}([0,T];C^k([0,L_0]))$. Actually, the map $t\mapsto y(\cdot, t)$ will even be in the space $C^{m-1}([0,T];C^{k+1}([0,L_0]))$. This is a consequence of Lemma \ref{additionaltimeregularitystresses1dsection1}.

\begin{corollary}\label{additionaltimeregularitytotaldeformation1dsection1}
    If $G\in C^{l}([0,T];C^k([0,L_0]))$ for $1\leq l\leq m-1$, the map $t\mapsto y(t, \cdot)$ is in the space $C^{l}([0,T];C^{k+1}([0,L_0]))$. Furthermore, assuming $G\in C^{l}([0,T];C^k([0,L_0]))$, the map $(\partial_X\phi)\circ g$ is in the space $C^{l}([0,T];C^{k+1}([0,L_0]))$.
\end{corollary}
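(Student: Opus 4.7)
The plan is to reduce both assertions to the Euler-Lagrange identity $(\phi_G'\circ g)(t,X)=\pi_0(X,S(t))$, combined with the higher time regularity of the stress from Lemma \ref{additionaltimeregularitystresses1dsection1} and the regularity of $\pi_0$ established (and extended) from Lemma \ref{1LipschitzCk}. The preceding lemma already gives $g\in C^l([0,T];C^{k+1}([0,L_0]))$, so $g$ is out of the way from the start.

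First I would continue the induction in the proof of Lemma \ref{1LipschitzCk}, using (W4) with $\partial_pW\in C^{k+m}$, to obtain $\pi_0\in C^{m}([S_0,S_1];C^{k+1}([0,L_0]))$ on any compact $S$-interval: each additional differentiation in $X$ or $S$ consumes one more derivative of $\partial_pW$, while $\partial_{pp}W$ stays uniformly bounded below on the relevant compact set, so the quotient formulas remain smooth. By Lemma \ref{additionaltimeregularitystresses1dsection1}, $S\in C^l([0,T];\RR)$ with values in a compact interval $[\Sigma_0,\Sigma_1]$. The chain rule then yields $t\mapsto \pi_0(\cdot,S(t))\in C^l([0,T];C^{k+1}([0,L_0]))$, since only $l\le m-1$ derivatives in $t$ are taken and $\pi_0$ carries one more derivative in $S$ than is needed. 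Because $(\phi_G'\circ g)(t,X)=\pi_0(X,S(t))$ by the Euler-Lagrange equation in Proposition \ref{Proposition Chapter Regularity Existence of Minimisers}, this establishes the second assertion.

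For the first assertion, the key observation is $\partial_Xy(t,X)=(\phi_G'\circ g)(t,X)\,G(t,X)$. Both factors lie in $C^l([0,T];C^k([0,L_0]))$, the first by what was just shown (restricted to $C^k$ in space) and the second by hypothesis, and pointwise multiplication is continuous and bilinear on this Banach space, so $\partial_Xy\in C^l([0,T];C^k([0,L_0]))$. The Dirichlet condition $y(t,0)=0$ then allows one to recover $y(t,X)=\int_0^X \partial_Xy(t,X')\,dX'$; the fundamental theorem of calculus supplies one additional spatial derivative, while differentiation under the integral preserves the $C^l$-regularity in $t$, giving $y\in C^l([0,T];C^{k+1}([0,L_0]))$.

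The main obstacle I anticipate is verifying the extended joint regularity of $\pi_0$, namely $C^m$ in $S$ and $C^{k+1}$ in $X$, uniformly on $[\Sigma_0,\Sigma_1]\times[0,L_0]$. This requires pushing the inductive argument of Lemma \ref{1LipschitzCk} up to mixed derivatives of order $k+m$ and keeping track of the quotients involving powers of $\partial_{pp}W$ in the denominator. Once this bookkeeping is in place, the remaining steps (two applications of the chain rule together with one spatial integration) are routine.
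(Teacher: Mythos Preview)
Your proposal is correct and follows essentially the same route as the paper: use the Euler--Lagrange identity $(\phi_G'\circ g)(t,X)=\pi_0(X,S(t))$, combine the regularity of $\pi_0$ (established in the proof of Lemma~\ref{additionaltimeregularitystresses1dsection1}) with $S\in C^l([0,T];\RR)$ to get $(\partial_X\phi)\circ g\in C^l([0,T];C^{k+1})$, then multiply by $G=\partial_X g$ and integrate in $X$ to recover $y$. One small bookkeeping correction: the mixed-derivative count from (W4) actually yields $\pi_0\in C^{m-1}([S_0,S_1];C^{k+1}([0,L_0]))$ rather than $C^m$, since $\partial_S^j\partial_X^{k+1}\pi_0$ requires $\partial_pW\in C^{k+1+j}$; but as you yourself note, only $l\le m-1$ derivatives in $S$ are needed, so this does not affect the argument.
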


\begin{proof}
    By Definition $\pi_0(X,S(t))=(\partial_X\phi)(t, g(t, X))$. We have already seen in the proof of Lemma \ref{additionaltimeregularitystresses1dsection1} that $\pi_0\circ S$ and therefore $(\partial_X\phi)\circ g$ is in the space $C^{l}([0,T];C^{k+1}([0,L_0]))$. Because the map $t\mapsto \partial_X g(t, \cdot)$ is in the space $C^{l}([0,T];C^k([0,L_0]))$ the product $t\mapsto (\partial_X \phi)(t, g(t, \cdot))\partial_X g(t, \cdot)$ is in the space $C^{l}([0,T];C^k([0,L_0]))$. Hence, the map $t\mapsto \partial_X(\phi(t, g(t, X)))$ is in the space $C^{l}([0,T];C^k([0,L_0]))$ and the map $t\mapsto y(t, \cdot)$ is in the space $C^{m-1}([0,T];C^{k+1}([0,L_0]))$.
\end{proof}

To prove additional time-regularity of the solution of the elliptic reaction-diffusion equation, we first recall the following standard result on time-dependent bounded linear operators on Banach spaces.

\begin{lemma}\label{timeregularityinverseellipticoperatorssection1}
    Let $X$,$Y$ be Banach spaces and $A\in C^l([0,T];\mathcal{L}(X,Y))$ for $l \in \NN$ such that $A(t)^{-1}$ exists for all $t\in [0,T]$ and $\norm{A(t)^{-1}}_{\mathcal{L}(Y,X)}\leq C$ for all $t\in [0,T]$. Then the inverse is also differentiable in time, that is, $A^{-1}\in C^l([0,T];\mathcal{L}(Y,X))$ and
    \begin{equation*}
        \frac{d}{dt}A(t)^{-1}=-A(t)^{-1}A'(t)A(t)^{-1}.
    \end{equation*}
\end{lemma}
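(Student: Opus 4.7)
The plan is to proceed by induction on $l$, starting with the case $l=0$ (continuity), then $l=1$ (the formula), and finally bootstrapping to higher orders. The key algebraic identity underlying everything is
\begin{equation*}
    A(t)^{-1} - A(s)^{-1} = A(t)^{-1}\bigl(A(s) - A(t)\bigr)A(s)^{-1},
\end{equation*}
obtained by multiplying $A(s) - A(t)$ on the left by $A(t)^{-1}$ and on the right by $A(s)^{-1}$.

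First I would verify continuity of $t \mapsto A(t)^{-1}$. Combining the identity above with the uniform bound $\norm{A(t)^{-1}}_{\mathcal{L}(Y,X)} \leq C$ and the continuity of $A$, one obtains
\begin{equation*}
    \norm{A(t)^{-1} - A(s)^{-1}}_{\mathcal{L}(Y,X)} \leq C^2 \, \norm{A(t) - A(s)}_{\mathcal{L}(X,Y)} \longrightarrow 0,
\end{equation*}
so $A^{-1} \in C^0([0,T];\mathcal{L}(Y,X))$. For the case $l=1$, I would compute the difference quotient, for $h \neq 0$ with $t+h \in [0,T]$,
\begin{equation*}
    \frac{A(t+h)^{-1} - A(t)^{-1}}{h} = -A(t+h)^{-1} \, \frac{A(t+h) - A(t)}{h} \, A(t)^{-1}.
\end{equation*}
Passing to the limit $h \to 0$, the first factor converges to $A(t)^{-1}$ in $\mathcal{L}(Y,X)$ by the continuity just established, the middle factor converges to $A'(t)$ in $\mathcal{L}(X,Y)$ by hypothesis, and the right factor is fixed. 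Since operator composition is jointly continuous and bilinear, the product converges in norm to $-A(t)^{-1} A'(t) A(t)^{-1}$, which establishes the claimed formula. Since $A'$ and $A^{-1}$ are both continuous in $t$, the right-hand side is continuous, so $A^{-1} \in C^1([0,T];\mathcal{L}(Y,X))$.

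For $l \geq 2$, I would proceed by induction. Assume the statement is proved for $l-1$, and suppose $A \in C^l([0,T];\mathcal{L}(X,Y))$. In particular $A \in C^1$, so the first-order formula applies and gives $\tfrac{d}{dt}A(t)^{-1} = -A(t)^{-1} A'(t) A(t)^{-1}$. Now $A' \in C^{l-1}([0,T];\mathcal{L}(X,Y))$ and, by the inductive hypothesis applied to $A$ viewed as an element of $C^{l-1}$, one has $A^{-1} \in C^{l-1}([0,T];\mathcal{L}(Y,X))$. Since composition of bounded linear operators is bilinear and continuous, the Leibniz rule applied repeatedly shows that products of $C^{l-1}$-maps with values in the appropriate operator spaces are again of class $C^{l-1}$. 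Hence $-A^{-1} A' A^{-1}$ belongs to $C^{l-1}([0,T];\mathcal{L}(Y,X))$, and therefore $A^{-1} \in C^l([0,T];\mathcal{L}(Y,X))$, which completes the induction.

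There is no substantial obstacle in this proof; the only point requiring a bit of care is the bootstrapping step, where one must invoke the bilinearity and continuity of operator composition to ensure that the product $A^{-1} \cdot A' \cdot A^{-1}$ inherits the full $C^{l-1}$ regularity via a Banach-space version of the Leibniz rule, rather than losing a derivative.
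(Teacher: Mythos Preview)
Your proof is correct and follows the standard argument via the resolvent identity, the difference-quotient computation, and induction on $l$. The paper itself does not supply a proof of this lemma: it is simply stated as a ``standard result on time-dependent bounded linear operators on Banach spaces'' and then invoked. Your write-up fills in exactly the details one would expect for such a fact, so there is nothing to compare against beyond noting that your approach is the canonical one.
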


The following Lemma states that the map $t\mapsto N(t, \cdot)=n(t, y(t, \cdot))$ is sufficiently regular in time. 

\begin{lemma}\label{additionaltimeregularitynutrients1dsection1}
    If $G\in C^{l}([0,T];C^k([0,L_0]))$ for $1\leq l\leq m-1$, the map $t\mapsto N(t, \cdot)$ is in the space $C^{l}([0,T];C^k([0,L_0]))$.
\end{lemma}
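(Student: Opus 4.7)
The plan is to reduce the lemma to two ingredients: (i) time regularity of the Eulerian nutrient concentration $n_G$ as a map $t\mapsto n(t,\cdot)\in C^l([0,T];H^{k+2}(0,l_0))$, and (ii) the time regularity of the total deformation $y_G$ from Corollary \ref{additionaltimeregularitytotaldeformation1dsection1}. The conclusion for $N_G=n_G\circ y_G$ then follows by the chain rule, since the Jacobian $y'_G$ is uniformly bounded below and above on $B_1$.

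Step 1. I would first upgrade the coefficients of the reaction-diffusion equation to $C^l$-in-time regularity. By Corollary \ref{additionaltimeregularitytotaldeformation1dsection1} together with Lemma \ref{additionaltimeregularitystresses1dsection1}, the map $t\mapsto \phi'_G(t,\cdot)\circ g(t,\cdot)=\pi_0(\cdot,S(t))$ is in $C^l([0,T];C^{k+1}([0,L_0]))$, and $y_G\in C^l([0,T];C^{k+1}([0,L_0]))$. Differentiating the identity $y_G(t,y_G^{-1}(t,x))=x$ in $t$ and inverting via the pointwise lower bound on $y'_G$ yields inductively that $y_G^{-1}\in C^l([0,T];C^{k+1}([0,l_0]))$. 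Writing $\phi'_G\circ\phi_G^{-1}=(\phi'_G\circ g)\circ y_G^{-1}$ and combining with $D_0,\beta_0\in W^{k+m,\infty}([0,L_0])$, the formulas \eqref{1assumptionsD} and \eqref{1assumpitonsbeta} together with Faà di Bruno then give
\begin{equation*}
  D_G,\ \beta_G\in C^l([0,T];C^k([0,l_0])),
\end{equation*}
with uniform bounds and with $D_G\geq D_{\min}P_0>0$.

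Step 2. I would cast the elliptic equation as an operator equation. After subtracting the affine lift $x\mapsto n_L+(n_R-n_L)x/l_0$ to reduce to homogeneous Dirichlet boundary data, define the family
\begin{equation*}
  A(t)\colon H^{k+2}(0,l_0)\cap H^1_0(0,l_0)\to H^k(0,l_0),\qquad A(t)u=-(D_G(t,\cdot)u')'+\beta_G(t,\cdot)u.
\end{equation*}
The product and sum structure together with Step 1 yields $A\in C^l([0,T];\mathcal{L}(X,Y))$, where $X,Y$ are the two spaces above. By Lemma \ref{1regularityoncoefficients} each $A(t)$ is boundedly invertible into $H^{k+2}$ with uniform constants. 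Applying Lemma \ref{timeregularityinverseellipticoperatorssection1} once gives $A^{-1}\in C^1([0,T];\mathcal{L}(Y,X))$ with derivative $-A^{-1}A'A^{-1}$; iterating this identity and using that every factor that appears ($A$ itself, its time-derivatives up to order $l$, and $A^{-1}$) lies in the appropriate $C^0$-in-time class of operators, one obtains $A^{-1}\in C^l([0,T];\mathcal{L}(Y,X))$, hence $n_G\in C^l([0,T];H^{k+2}(0,l_0))$. By Sobolev embedding, $n_G\in C^l([0,T];C^{k+1}([0,l_0]))$.

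Step 3. Finally, I would assemble $N_G=n_G\circ y_G$. Because $y_G\in C^l([0,T];C^{k+1}([0,L_0]))$, $n_G\in C^l([0,T];C^{k+1}([0,l_0]))$, and $y_G([0,L_0])=[0,l_0]$ with $y'_G$ uniformly positive, the composition formula for mixed time-space derivatives of $n_G(t,y_G(t,X))$ produces only products of quantities that are already controlled in $C^l([0,T];C^k([0,L_0]))$, giving the claim.

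The main obstacle is the operator-theoretic Step 2: one must pick the Banach spaces $X,Y$ so that $A(t)$ is simultaneously boundedly invertible with uniform constants and of class $C^l$ in time, and then track the inhomogeneous boundary conditions through the lifting. Once the correct functional-analytic framework is installed, Steps 1 and 3 are routine applications of the chain and product rules, leveraging the uniform non-degeneracy of $g'=G$, $\phi'_G$ and $y'_G$ on $B_1$.
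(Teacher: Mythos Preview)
Your approach is essentially the paper's: establish $C^l$-in-time regularity of the coefficients $D_G,\beta_G$ via Corollary \ref{additionaltimeregularitytotaldeformation1dsection1} and the decomposition $\phi'_G\circ\phi_G^{-1}=(\phi'_G\circ g)\circ y_G^{-1}$, cast the reaction--diffusion equation as $A(t)\hat n=h(t)$ after subtracting the affine lift, invoke Lemma \ref{timeregularityinverseellipticoperatorssection1} for the time regularity of $A^{-1}$, and compose with $y_G$ at the end. The paper proceeds identically.

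One technical point to watch: with your choice $X=H^{k+2}\cap H^1_0$, $Y=H^k$, the map $u\mapsto -(D_G u')'$ requires $D_G\in W^{k+1,\infty}$ to land in $H^k$, so to obtain $A\in C^l([0,T];\mathcal L(X,Y))$ you would need $D_G\in C^l([0,T];W^{k+1,\infty})$, one derivative more than your Step~1 conclusion $D_G\in C^l([0,T];C^k)$. The paper sidesteps this by taking $X=H^{k+1}\cap H^1_0$ and $Y=H^{k-1}$, which is exactly compatible with $D_G,\beta_G\in C^l([0,T];C^k)$ and still suffices because $H^{k+1}\hookrightarrow C^k$ in one dimension. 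Either shift your indices down by one, or strengthen Step~1 to $D_G,\beta_G\in C^l([0,T];W^{k+1,\infty})$ (which is in fact available since $D_0,\beta_0\in W^{k+m,\infty}$ and $(\phi'_G\circ g),\,y_G^{-1}\in C^l([0,T];C^{k+1})$).
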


\begin{proof}
    We have seen in Corollary \ref{additionaltimeregularitytotaldeformation1dsection1} that the map $t\mapsto y(t, \cdot)$ is in the space $C^{l}([0,T];C^{k+1}([0,L_0]))$ if $G\in C^{l}([0,T];C^k([0,L_0]))$. It remains to show that the map $t\mapsto n(t, \cdot)$, where $n$ denotes the solution of the elliptic reaction-diffusion equation as in Lemma \ref{1regularityoncoefficients}, is in the space $C^l([0,T];C^{k}([0,l_0]))$. We do this by applying Lemma \ref{timeregularityinverseellipticoperatorssection1}. We first note that the maps $D$ and $\beta$ as defined in Lemma \ref{regularitycoefficients1d} are in the space $C^l([0,T];C^k([0,l_0]))$. This can be easily seen by writing 
    \begin{align*}
        D_G&=(\phi_G'\circ \phi_G^{-1})(D_0\circ y_G^{-1})\\&=(\phi_G'\circ g \circ g^{-1} \circ\phi_G^{-1})(D_0\circ y_G^{-1})\\&=((\phi_G'\circ g) \circ  y_G^{-1})(D_0\circ y_G^{-1})
    \end{align*}
    and
    \begin{align*}
        \beta_G&=(\phi_G'\circ \phi_G^{-1})(\beta_0\circ y_G^{-1})\\&=(\phi_G'\circ g \circ g^{-1}\circ \phi_G^{-1})(\beta_0\circ y_G^{-1})\\&=((\phi_G'\circ g) \circ  y_G^{-1})(\beta_0\circ y_G^{-1}).
    \end{align*}
    According to our assumption that $D_0,\beta_0\in W^{k+m}([0,L_0])$ it follows from Lemma \ref{regularitycoefficients1d} and Corollary \ref{additionaltimeregularitytotaldeformation1dsection1} that the maps $D$ and $\beta$ are in the space $C^{l}([0,T];C^k([0,l_0]))$ as a concatenation of sufficiently regular maps. Note that in particular the map $y_G^{-1}$ is in the space $C^{l}([0,T];C^{k+1}([0,L_0]))$, which follows from the invertability of the map $y_G$ and Corollary \ref{additionaltimeregularitytotaldeformation1dsection1}. We set $X=H^{k+1}([0,l_0])\cap H_0^1([0,l_0])$ and $Y=H^{k-1}([0,l_0])$. By elliptic regularity theory and the standard Sobolev embeddings, the inverse of the map $A \in C^l([0,T];\mathcal{L}(X,Y))$ defined via $A(t):\hat{n}( \cdot)\mapsto -\partial_x(D(t,\cdot)\partial_x \hat{n}( \cdot))+\beta(t,\cdot)\hat{n}(\cdot)$ is a linear and continuous map from $Y$ to $X$ and the operator norm is uniformly bounded in $t\in [0,T]$ by standard elliptic regularity estimates. Moreover, because the maps $D$ and $\beta$ are in the space $C^{l}([0,T];C^k([0,l_0]))$, the operator $A$ is in the space $C^l([0,T];\mathcal{L}(X,Y))$. If we define $h(t,x):= \partial_x(D(t,x)\frac{n_R-n_L}{l_0})- \beta(t,x)(\frac{n_R-n_L}{l_0}x+ n_L)$, then $\hat{n}(t,\cdot) \in X$ satisfies
    \begin{equation*}
        -\partial_x(D(t,\cdot)\partial_x \hat{n}(t,\cdot))+\beta(t,\cdot)\hat{n}(t,\cdot)=h(t,\cdot)
    \end{equation*}
    with $\hat{n}(t,0)=\hat{n}(l_0)=0$ if and only if $n(t,x):= \hat{n}(t,x) + \frac{n_R-n_L}{l_0}x + n_L$ satisfies $n(t,0)=n_L$, $n(t,l_0)=n_R$ and
    \begin{equation*}
        -\partial_x(D(t,\cdot)\partial_x n(t,\cdot))+\beta(t,\cdot)(t,\cdot)=0.
    \end{equation*}
    By definition $n(t, \cdot) \in H^{k+1}([0,l_0])$. Hence, Lemma \ref{timeregularityinverseellipticoperatorssection1} proves the claim by writing
    \begin{equation*}
        n(t,x):=A(t)^{-1}[h(t, \cdot)](x)+\frac{n_R-n_L}{l_0}x+n_L,
    \end{equation*}
    which implies that $n \in C^l([0,T]; H^{k+1}([0,l_0])) \subset C^l([0,T]; C^{k}([0,l_0]))$ by the standard Sobolev embeddings in one spatial dimension.
\end{proof}

\begin{theorem}\label{maintheoremsection1dtimespaceregularityhighertimeregularity}
    Suppose that $T>0$, that $W$ satisfies (W1)-(W4) and that $\hat{\mathcal{G}}$ satisfies (G1)-(G4). Then, there exist a unique solution of the growth problem in the sense of Definition \ref{Definition Chapter Regularity Definition of Solutions} on $[0,T]$.
\end{theorem}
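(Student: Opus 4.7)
The plan is to build on Proposition \ref{maintheoremsection1dtimespaceregularity}, which already provides a unique global solution $G \in C^1([0,T];C^k([0,L_0]))$ of the growth problem, and to boost its time regularity by an induction on $l$, $1\leq l\leq m-1$, using the differential equation $\dot G=\hat{\mathcal{G}}(G)$ together with Lemmas \ref{additionaltimeregularitystresses1dsection1} and \ref{additionaltimeregularitynutrients1dsection1}. Uniqueness will then be inherited from the $m=1$ case via Picard--Lindelöf.

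The induction hypothesis is that $G\in C^l([0,T];C^k([0,L_0]))$ for some $1\leq l\leq m-1$, which holds for $l=1$ by Proposition \ref{maintheoremsection1dtimespaceregularity}. Lemma \ref{additionaltimeregularitystresses1dsection1} then yields $S\in C^l([0,T];\RR)$, Corollary \ref{additionaltimeregularitytotaldeformation1dsection1} gives $y\in C^l([0,T];C^{k+1}([0,L_0]))$, and Lemma \ref{additionaltimeregularitynutrients1dsection1} produces $N\in C^l([0,T];C^k([0,L_0]))$. The crucial step is to argue that the composition
\begin{equation*}
    t\mapsto \hat{\mathcal{G}}(G(t))(\cdot)=\mathcal{G}(G(t,\cdot),S(t),N(t,\cdot),\cdot)
\end{equation*}
lies in $C^l([0,T];C^k([0,L_0]))$. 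This is where assumption (G2) enters in its full strength: the existence and continuity of the mixed partial derivatives of $\mathcal{G}$ up to order $k+m-1$ in the first and third variable, $m-1$ in the second, and $k$ in the fourth, combined with a Faà di Bruno-type expansion for the temporal and spatial derivatives of the composition, ensures that all factors that appear in $\partial_t^j\partial_X^i \hat{\mathcal{G}}(G(t))(X)$ for $i\leq k$, $j\leq l$ are evaluations of partial derivatives of $\mathcal{G}$ (which are jointly continuous and bounded on the compact set $[\Gamma_0,\Gamma_1]\times[\Sigma_0,\Sigma_1]\times[0,H_1]\times[0,L_0]$) multiplied by time- and space-derivatives of $G$, $S$ and $N$ of orders already controlled by the induction hypothesis and by the three cited lemmas. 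Hence $\hat{\mathcal{G}}(G)\in C^l([0,T];C^k([0,L_0]))$, and from $\dot G=\hat{\mathcal{G}}(G)$ we conclude $G\in C^{l+1}([0,T];C^k([0,L_0]))$. Iterating this step from $l=1$ up to $l=m-1$ produces $G\in C^m([0,T];C^k([0,L_0]))$.

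Uniqueness is immediate: any two solutions in $C^m([0,T];C^k([0,L_0]))$ are, in particular, solutions in $C^1([0,T];C^k([0,L_0]))$, so they agree by the uniqueness statement in Proposition \ref{maintheoremsection1dtimespaceregularity}. The main technical obstacle is the bookkeeping in the Faà di Bruno step, that is, writing $\partial_t^j\hat{\mathcal{G}}(G(t))(X)$ as a finite sum of products of partial derivatives of $\mathcal{G}$ evaluated at $(G(t,X),S(t),N(t,X),X)$ times derivatives of $G(t,X)$, $S(t)$ and $N(t,X)$ of order at most $j$, and then further applying $\partial_X^i$ for $i\leq k$. The bounds on each factor and their Lipschitz continuity in time follow from the induction hypothesis together with (G3), Lemma \ref{additionaltimeregularitystresses1dsection1} and Lemma \ref{additionaltimeregularitynutrients1dsection1}, and the required orders of differentiability of $\mathcal{G}$ are exactly those postulated in (G2); no additional regularity of the model data is needed.
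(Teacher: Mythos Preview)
Your proposal is correct and follows essentially the same approach as the paper: start from the $C^1$-in-time solution given by Proposition \ref{maintheoremsection1dtimespaceregularity}, feed $G\in C^l$ into Lemmas \ref{additionaltimeregularitystresses1dsection1} and \ref{additionaltimeregularitynutrients1dsection1} to get $S,N\in C^l$, use (G2) to see that the right-hand side $\hat{\mathcal{G}}(G)$ is $C^l$ in time with values in $C^k$, and then conclude $G\in C^{l+1}$ from the ODE and iterate. The paper phrases the bootstrap via the integral form of the ODE and the fundamental theorem of calculus rather than via $\dot G=\hat{\mathcal{G}}(G)$ directly, and is less explicit about the Fa\`a di Bruno bookkeeping and the uniqueness argument you spell out, but the substance is the same.
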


\begin{proof}
    According to Theorem \ref{maintheoremsection1dtimespaceregularity}, there exists a global solution of the growth problem in the sense of Definition \ref{Definition Chapter Regularity Definition of Solutions} on $[0,T]$ in the space $C^1([0,T];C^k([0,L_0]))$. By integrating the ordinary differential equation, we inductively show $C^m$-regularity of the solution in time. Hence, we must use that the maps $t\mapsto S(t)$ with values in $\mathbb{R}$ and $t\mapsto N(t, \cdot)$ with values in $C^k([0,L_0])$ are $(m-1)$-times differentiable with respect to $t$. Then, we can deduce $C^m$-regularity in time. We integrate and write
\begin{equation}\label{additionaltimeregularity1dBochnerintegralformulation}
    G(t, X)=G(0,X)+\int_0^t \mathcal{G}(G(t, X),S(t),N(t, X),X)dt.
\end{equation}
Because we have shown in Lemma \ref{additionaltimeregularitystresses1dsection1} and in Lemma \ref{additionaltimeregularitynutrients1dsection1} that for $m\geq 2$ and $1\leq l\leq m-1$ the maps $S\in C^{l}([0,T];\mathbb{R})$ and $N\in C^{l}([0,T];C^k([0,L_0]))$ if $G\in C^{l}([0,T];C^k([0,L_0]))$, we can deduce, by taking into account assumption (G2) and applying Lemma \ref{additionaltimeregularitystresses1dsection1} and Lemma \ref{additionaltimeregularitynutrients1dsection1}, that the integrand in equation \ref{additionaltimeregularity1dBochnerintegralformulation} is in the space $C^{l}([0,T];C^k([0,L_0]))$. By the fundamental theorem of calculus $G\in C^{l+1}([0,T];C^k([0,L_0]))$. Note in particular that the fundamental theorem of calculus also accounts for differentiability at the boundary of the interval $[0,T]$ where the differential is to be understood one-sidedly. Inductively, we obtain that $G\in C^{m}([0,T];C^k([0,L_0]))$.
\end{proof}

\vspace{\baselineskip}

\bibliographystyle{alpha}
\bibliography{regularityelasticrods.bib}

\newcommand{\etalchar}[1]{$^{#1}$}
\begin{thebibliography}{GRTTV08}

\bibitem[AAA{\etalchar{+}}11]{Ambrosi.Ateshian.Arruda.Cowin.Dumais.Goriely.Holzapfel.Humphrey.Kemkemer.Kuhl.Olberding.Taber.Garipikati.2011}
D.~Ambrosi, G.~A. Ateshian, E.~M. Arruda, S.~C. Cowin, J.~Dumais, A.~Goriely, G.~A. Holzapfel, J.~D. Humphrey, R.~Kemkemer, E.~Kuhl, J.~E. Olberding, L.~A. Taber, and K.~Garikipati.
\newblock Perspectives on biological growth and remodeling.
\newblock {\em J. Mech. Phys. Solids}, 59(4):863--883, 2011.

\bibitem[AG07]{Ambrosi.Guillou.2007}
D.~Ambrosi and A.~Guillou.
\newblock Growth and dissipation in biological tissues.
\newblock {\em Contin. Mech. Thermodyn.}, 19(5):245--251, 2007.

\bibitem[AL23a]{Abels.Liu.2023.2}
H.~Abels and Y.~Liu.
\newblock On a fluid-structure interaction problem for plaque growth.
\newblock {\em Nonlinearity}, 36(1):537--583, 2023.

\bibitem[AL23b]{Abels.Liu.2023.1}
H.~Abels and Y.~Liu.
\newblock On a fluid-structure interaction problem for plaque growth: cylindrical domain.
\newblock {\em J. Differential Equations}, 345:334--400, 2023.

\bibitem[AL23c]{Abels.Liu.2023.3}
H.~Abels and Y.~Liu.
\newblock Short-time existence of a quasi-stationary fluid-structure interaction problem for plaque growth.
\newblock {\em Adv. Nonlinear Anal.}, 12(1):Paper No. 20230101, 32, 2023.

\bibitem[AM02]{Ambrosi.Mollica.2002}
D.~Ambrosi and F.~Mollica.
\newblock On the mechanics of a growing tumor.
\newblock {\em Internat. J. Engrg. Sci.}, 40(12):1297--1316, 2002.

\bibitem[BAG05]{Ben.Amar.Martine.Goriely.2005}
M.~Ben~Amar and A.~Goriely.
\newblock Growth and instability in elastic tissues.
\newblock {\em J. Mech. Phys. Solids}, 53(10):2284--2319, 2005.

\bibitem[Bal81]{Ball.1981}
J.~M. Ball.
\newblock Remarques sur l'existence et la r\'egularit\'e{} des solutions d'elastostatique non lin\'eaire.
\newblock In {\em Recent contributions to nonlinear partial differential equations}, volume~50 of {\em Res. Notes in Math.}, pages 50--62. Pitman, Boston, Mass.-London, 1981.

\bibitem[BD23]{Bangert.Dolzmann.2023}
K.~Bangert and G.~Dolzmann.
\newblock Stress-modulated growth in the presence of nutrients---existence and uniqueness in one spatial dimension.
\newblock {\em ZAMM Z. Angew. Math. Mech.}, 103(10):Paper No. e202200558, 29, 2023.

\bibitem[BFK23]{Badal.Friedrich.Kruzik.2023}
R.~Badal, M.~Friedrich, and M.~Kru\v{z}\'ik.
\newblock Nonlinear and linearized models in thermoviscoelasticity.
\newblock {\em Arch. Ration. Mech. Anal.}, 247(1):Paper No. 5, 73, 2023.

\bibitem[BFM24]{Badal.Friedrich.Machill.2024}
R.~Badal, M.~Friedrich, and L.~Machill.
\newblock Derivation of a von {K}\'arm\'an plate theory for thermoviscoelastic solids.
\newblock {\em Math. Models Methods Appl. Sci.}, 34(14):2749--2824, 2024.

\bibitem[BPS17]{Bressan.Palladino.Shen.2017}
A.~Bressan, M.~Palladino, and W.~Shen.
\newblock Growth models for tree stems and vines.
\newblock {\em J. Differential Equations}, 263(4):2280--2316, 2017.

\bibitem[DNS23]{Davoli.Nik.Stefanelli.2023}
E.~Davoli, K.~Nik, and U.~Stefanelli.
\newblock Existence results for a morphoelastic model.
\newblock {\em ZAMM Z. Angew. Math. Mech.}, 103(7):Paper No. e202100478, 25, 2023.

\bibitem[DNST24]{Davoli.Nik.Stefanelli.Tomasetti.2024}
E.~Davoli, K.~Nik, U.~Stefanelli, and G.~Tomassetti.
\newblock An existence result for accretive growth in elastic solids.
\newblock {\em Math. Models Methods Appl. Sci.}, 34(11):2169--2190, 2024.

\bibitem[Gor17]{Goriely.2017}
A.~Goriely.
\newblock {\em The mathematics and mechanics of biological growth}, volume~45 of {\em Interdisciplinary Applied Mathematics}.
\newblock Springer, New York, 2017.

\bibitem[GRTTV08]{Goriely.RobertsonTessi.Vandiver.2008}
A.~Goriely, M.~Robertson-Tessi, M.~Tabor, and R.~Vandiver.
\newblock Elastic growth models.
\newblock In {\em Mathematical modelling of biosystems}, volume 102 of {\em Appl. Optim.}, pages 1--44. Springer, Berlin, 2008.

\bibitem[Kr{\"o}60]{Kroener.1960}
E.~Kr{\"o}ner.
\newblock Allgemeine {K}ontinuumstheorie der {V}ersetzungen und {E}igenspannungen.
\newblock {\em Arch. Rational Mech. Anal.}, 4:273--334, 1960.

\bibitem[Lee69]{Lee.1969}
E.~H. Lee.
\newblock Elastic-plastic deformation at finite strains.
\newblock {\em Journal of Applied Mechanics}, 36(1):1--6, 03 1969.

\bibitem[LH02]{Lubarda.Hoger.2002}
V.A. Lubarda and A.~Hoger.
\newblock On the mechanics of solids with a growing mass.
\newblock {\em International Journal of Solids and Structures}, 39(18):4627--4664, 2002.

\bibitem[Mie03]{Mielke.2003}
A.~Mielke.
\newblock Energetic formulation of multiplicative elasto-plasticity using dissipation distances.
\newblock {\em Contin. Mech. Thermodyn.}, 15(4):351--382, 2003.

\bibitem[Mie04]{Mielke.2004}
A.~Mielke.
\newblock Existence of minimizers in incremental elasto-plasticity with finite strains.
\newblock {\em SIAM J. Math. Anal.}, 36(2):384--404, 2004.

\bibitem[MK12]{Menzel.Kuhl.2021}
A.~Menzel and E.~Kuhl.
\newblock Frontiers in growth and remodeling.
\newblock {\em Mechanics Research Communications}, 42:1--14, 2012.
\newblock Recent Advances in the Biomechanics of Growth and Remodeling.

\bibitem[MLG20]{Moulton.Lessinnes.Goriely.2020}
D.~E. Moulton, T.~Lessinnes, and A.~Goriely.
\newblock Morphoelastic rods {III}: {D}ifferential growth and curvature generation in elastic filaments.
\newblock {\em J. Mech. Phys. Solids}, 142:104022, 19, 2020.

\bibitem[MR20]{Mielke.Roubicek.2020}
A.~Mielke and T.~Roubi\v{c}ek.
\newblock Thermoviscoelasticity in {K}elvin-{V}oigt rheology at large strains.
\newblock {\em Arch. Ration. Mech. Anal.}, 238(1):1--45, 2020.

\bibitem[RHM94]{Rodriguez.Hoger.McCulloch.1994}
E.~K. Rodriguez, A.~Hoger, and A.~D. McCulloch.
\newblock Stress-dependent finite growth in soft elastic tissues.
\newblock {\em Journal of Biomechanics}, 27(4):455--467, 1994.

\bibitem[SDM{\etalchar{+}}82]{Skalak.Dasgupta.Moss.Otten.Dullemejer.Vilman.1982}
R.~Skalak, G.~Dasgupta, M.~Moss, E.~Otten, P.~Dullemeijer, and H.~Vilmann.
\newblock Analytical description of growth.
\newblock {\em Journal of Theoretical Biology}, 94(3):555--577, 1982.

\bibitem[Tab95]{Taber.1995}
L.~A. Taber.
\newblock Biomechanics of growth, remodeling, and morphogenesis.
\newblock {\em Applied Mechanics Reviews}, 48:487--545, 1995.

\bibitem[VG09]{Goriely.Vandiver.2009}
R.~Vandiver and A.~Goriely.
\newblock Morpho-elastodynamics: the long-time dynamics of elastic growth.
\newblock {\em J. Biol. Dyn.}, 3(2-3):180--195, 2009.

\bibitem[YJNRR16]{Yang.Jaeger.Neuss-Radu.Richter.2016}
Y.~Yang, W.~J\"ager, M.~Neuss-Radu, and T.~Richter.
\newblock Mathematical modeling and simulation of the evolution of plaques in blood vessels.
\newblock {\em J. Math. Biol.}, 72(4):973--996, 2016.

\end{thebibliography}

\end{document}